\theoremstyle{plain}
\newtheorem{theorem}{Theorem}[section]
\newtheorem{proposition}[theorem]{Proposition}
\newtheorem{corollary}[theorem]{Corollary}
\newtheorem{lemma}[theorem]{Lemma}
\newtheorem{claim}[theorem]{Claim}
\newtheorem*{question}{Question}
\newtheorem*{conjecture}{Conjecture}
\newtheorem*{anosov_closing_lemma}{Anosov Closing Lemma}
\newtheorem*{homotopy_closing_lemma}{Homotopy Closing Lemma}
\newtheorem*{recurrent_links_lemma}{Recurrent Links Lemma}
\newtheorem*{closed_orbits_theorem}{Closed Orbits Theorem}
\newtheorem*{morse_lemma}{Morse Lemma}
\newtheorem*{moores_theorem}{Moore's Theorem}
\theoremstyle{definition}
\newtheorem{definition}[theorem]{Definition}
\theoremstyle{remark}
\newtheorem{remark}[theorem]{Remark}
\newtheorem{example}[theorem]{Example}
\newcommand{\bR}{\mathbb{R}}
\newcommand{\bZ}{\mathbb{Z}}
\newcommand{\bH}{\mathbb{H}}
\newcommand{\acts}{\curvearrowright}
\newcommand{\racts}{\curvearrowleft}
\newcommand{\rquotient}[2]{{\left.\raisebox{.2em}{$#1$}\middle/\raisebox{-.2em}{$#2$}\right.}}
\newcommand{\cD}{\mathcal{D}}
\newcommand{\hcD}{\bm{\mathcal{D}}}
\newcommand{\cI}{\mathcal{I}}
\newcommand{\he}{\mathbf{e}}
\newcommand{\hP}{\mathbf{P}}
\newcommand{\flow}[1]{\langle #1 \rangle}
\newcommand{\tM}{\widetilde{M}}
\newcommand{\tPhi}{\widetilde{\Phi}}
\newcommand{\tTheta}{\widetilde{\Theta}}
\newcommand{\hK}{\mathbf{K}}
\newcommand{\hL}{\mathbf{L}}
\newcommand{\hN}{\mathbf{N}}
\newcommand{\chK}{\mathring{\mathbf{K}}}
\newcommand{\chL}{\mathring{\mathbf{L}}}
\newcommand{\chN}{\mathring{\mathbf{N}}}
\newcommand{\hA}{\mathbf{A}}
\newcommand{\hB}{\mathbf{B}}
\newcommand{\hC}{\mathbf{C}}
\newcommand{\hatm}{\widehat{m}}
\newcommand{\cM}{\mathcal{M}}
\newcommand{\chcD}{\mathring{\hcD}}
\definecolor{maroon}{RGB}{128,0,0}
\definecolor{navy}{RGB}{0,0,128}
\definecolor{dkgreen}{RGB}{0,128,0}
\DeclareMathOperator{\T}{T}
\DeclareMathOperator{\U}{U}
\DeclareMathOperator{\lk}{lk}
\title{Coarse hyperbolicity and closed orbits for quasigeodesic flows}
\author{Steven Frankel}
\address{Department of Mathematics\\Washington University in St Louis\\St Louis, MO}
\email{steven.frankel@wustl.edu}
\date{\today}
\begin{document}

\begin{abstract}
We prove a conjecture of Calegari's, that every quasigeodesic flow on a closed hyperbolic $3$-manifold contains a closed orbit.
\end{abstract}

\maketitle

\section{Introduction}
In 1950, Seifert conjectured that every nonsingular flow on the 3-sphere must contain a closed orbit \cite{Seifert}. The first counterexamples appeared in 1974, when Schweitzer showed that every nonsingular flow on a $3$-manifold can be deformed to a $C^1$ flow with no closed orbits \cite{Schweizer}. These examples have been generalized considerably, and it is known that the flow can be taken to be smooth \cite{KKuperberg} or volume-preserving \cite{GKuperberg}.

On the other hand, there are certain geometric constraints that ensure the existence of closed orbits. Taubes' 2007 proof of the 3-dimensional Weinstein conjecture shows that every Reeb flow on a closed 3-manifold has a closed orbit  \cite{Taubes}. Reeb flows are \emph{geodesible}, i.e. there is a Riemannian metric in which the flowlines are geodesics. In 2010, Rechtman showed that any real analytic geodesible flow on a closed 3-manifold has a closed orbit, unless the manifold is a torus bundle with reducible monodromy \cite{Rechtman}.

Geodesibility is a global geometric condition. In contrast, a flow is \emph{quasigeodesic} if the path taken by each point is coarsely comparable to a geodesic, a local condition. In this paper, we show that every quasigeodesic flow on a closed hyperbolic $3$-manifold contains a closed orbit, as conjectured by Calegari.

Our argument works by studying the transverse behavior of a quasigeodesic flow, which displays a kind of ``coarse hyperbolicity'' whenever the ambient manifold is hyperbolic. In fact, our argument can be applied to a larger class of flows with coarsely hyperbolic transverse structures.

In addition, one can think of this coarse transverse hyperbolicity as a topological analogue of the transverse hyperbolicity exhibited by a pseudo-Anosov flow. This analogy is taken further in a proof, announced by the author, of Calegari's Flow Conjecture, which asserts that any quasigeodesic flow on a closed hyperbolic $3$-manifold can be deformed to a flow that is both quasigeodesic and pseudo-Anosov.

\subsection{Quasigeodesic flows}
A \emph{flow} on a manifold $M$ is a continuous map 
\[ \Phi_{(\cdot)}(\cdot): \bR \times M \to M \]
such that 
\begin{itemize}
	\item $\Phi_0(x) = x$ for all $x \in M$, and 
	\item $\Phi_s(\Phi_t(x)) = \Phi_{t + s}(x)$ for all $x \in M$ and $t, s \in \bR$.
\end{itemize}
For each $t \in \bR$, the \emph{time-$t$ map} $\Phi_t(\cdot): M \to M$ is a homeomorphism, so we can think of a flow as a continuous action $M \racts \bR$, writing $x \cdot t = \Phi_t(x)$. A flow is \emph{nonsingular} if it has no global fixed points, i.e. for each $x \in M$ there is a time $t \in \bR$ such that $\Phi_t(x) \neq x$. 

A flow $\Phi$ on a manifold $M$ is \emph{quasigeodesic} if each orbit lifts to a quasigeodesic in the universal cover $\tM$. That is, each lifted orbit admits a parametrization $\gamma: \bR \to \tM$ satisfying
\[ \frac{1}{k} \cdot d(\gamma(s), \gamma(t)) - \epsilon \leq |s - t| \leq k \cdot d(\gamma(x), \gamma(y)) + \epsilon \]
for constants $k > 0, \epsilon \geq 0$ that may depend on the orbit. It is \emph{uniformly quasigeodesic} if the constants $k, \epsilon$ can be chosen independent of the orbit.

If $M$ is a closed hyperbolic $3$-manifold, then the universal cover $\tM$ is identified with $\bH^3$, so it has a natural compactification to a closed $3$-ball $\tM \sqcup S^2_\infty$. Here, the \emph{sphere at infinity} $S^2_\infty$ is identified with the boundary of hyperbolic space in the unit ball model. The action of $\pi_1(M)$ on the universal cover is isometric, so it extends to the sphere at infinity.

A flow $\Phi$ on $M$ lifts to a flow $\tPhi$ on $\tM$, but the orbits of the lifted flow need not behave well with respect to the sphere at infinity. In particular, they may remain in bounded subsets of $\tM$, or accumulate on arbitrary closed subsets of $S^2_\infty$. When $\Phi$ is quasigeodesic, however, the following so-called \emph{Morse Lemma} implies that each lifted orbit has well-defined and distinct endpoints in $S^2_\infty$. See \cite{Gromov}, \cite[Corollary~3.44]{Kapovich}, or \cite[\S III.H]{BridsonHaefliger}.

\begin{morse_lemma}\label{lemma:Morse}
	Every quasigeodesic in $\bH^3$ lies at a bounded distance from a unique geodesic. Furtherore, there are constants $C(k, \epsilon)$ such that every $(k, \epsilon)$-quasigeodesic in $\bH^3$ lies in the $C(k, \epsilon)$-neighborhood of its associated geodesic.
\end{morse_lemma}

In addition, the endpoints of lifted orbits vary continuously, and this behavior characterizes the quasigeodesic flows on a closed hyperbolic $3$-manifold. They are exactly the flows that can be studied ``from infinity'' in the following sense.

\begin{proposition}[\hspace{1sp}{\cite[Theorem~B]{FenleyMosher} \& \cite[Lemma~4.3]{Calegari}}]\label{proposition:QGFlowCharacterization}
	Let $\Phi$ be a flow on a closed hyperbolic $3$-manifold $M$, and let $\tPhi$ be the lifted flow on the universal cover $\tM$. Then $\Phi$ is quasigeodesic if and only if
	\begin{enumerate}
		\item each orbit of $\tPhi$ has well-defined and distinct endpoints in $S^2_\infty$, and
		\item the positive and negative endpoints of $x \cdot \bR$ vary continuously with $x \in \tM$.
	\end{enumerate}
\end{proposition}

The simplest examples of quasigeodesic flows come from fibrations.

\begin{example}
	Zeghib showed that any flow on a closed $3$-manifold $M$ (not necessarily hyperbolic) that is transverse to a fibration is quasigeodesic \cite{Zeghib}. The idea is to lift such a flow to the infinite cyclic cover dual to a fiber $\Sigma \subset M$, which may be identified with $\Sigma \times \bR$ in such a way that the lifts of $\Sigma$ are of the form $\Sigma \times \{i\}$ for $i \in \bZ$. Quasigeodesity follows from the observation that there are upper and lower bounds on the distance between adjacent lifts, as well as the time it takes for the flow to move points from one lift to the next.	
\end{example}

On the other hand, there are many quasigeodesic flows that are not transverse to fibrations, even virtually (i.e. after passing to a finite cover).

\begin{example}
	Gabai showed that any nontrivial second cohomology class on a closed $3$-manifold represents the depth-zero leaf of a taut, finite-depth foliation \cite{Gabai}. Fenley and Mosher showed that there is a quasigeodesic flow that is transverse or ``almost transverse'' to such a foliation \cite{FenleyMosher}. If one takes a cohomology class that is not virtually represented by a fiber, then the associated quasigeodesic flow is not virtually transverse to a fibration.
\end{example}

\subsection{Transverse hyperbolicity}
For motivation, we recall the Anosov Closing Lemma, which produces closed orbits for Anosov and pseudo-Anosov flows.

A smooth flow $\Phi$ on a $3$-manifold $M$ is \emph{Anosov} if it preserves a splitting of the tangent bundle $\T M$ into three one-dimensional sub-bundles
\[ \T M = E^s \oplus \T \Phi \oplus E^u \]
where $T \Phi$ is tangent to the flow, and the flow exponentially contracts the \emph{stable bundle} $E^s$ and exponentially expands the \emph{unstable bundle} $E^u$. Here and throughout, we use the convention that ``expansion'' means contraction in backwards time. This gives rise to a transverse pair of two-dimensional foliations, the \emph{weak stable} and \emph{weak unstable foliations}, obtained by integrating the $2$-dimensional sub-bundles $\T \Phi \oplus E^s$ and $\T \Phi \oplus E^u$.

A flow on a $3$-manifold is \emph{pseudo-Anosov} if it is Anosov everywhere except near a collection of isolated closed orbits, and the weak stable and unstable foliations on the complement of these orbits extend to \emph{singular} foliations on the entire manifold. Figure~\ref{figure:2dFoliations} illustrates the local picture near a singularity of order $4$.

\begin{figure}[h]
	\includegraphics{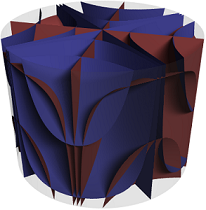}
	\caption{Singularities of stable and unstable foliations.} \label{figure:2dFoliations}
\end{figure}

Let $\phi: \Sigma \to \Sigma$ be a homeomorphism of a closed surface, and consider the surface bundle $M_\phi$ with monodromy $\phi$, i.e. the closed $3$-manifold
\[ M_\phi = \rquotient{\Sigma \times [0, 1]}{(x, 1) \sim (\phi(x), 0)}. \]
The semi-flow $(x, y) \cdot t = (x, y + t)$ on $M \times [0, 1]$ glues up to produce a flow $\Phi$ on $M_\phi$ called the \emph{suspension flow} associated to $\phi$.

The suspension flow associated to an arbitrary homeomorphism is quasigeodesic since it is transverse to the fibration of $M_\phi$ by the images of the surfaces $\Sigma \times \{y\}$. In addition, the suspension flow associated to a pseudo-Anosov homeomorphism is a pseudo-Anosov flow. Its $2$-dimensional (singular) weak stable and unstable foliations can be seen explicitly by flowing the $1$-dimensional (singular) stable and unstable foliations of the associated homeomorphism, thought of as living on a fiber.

An \emph{almost-cycle} in a flow is a closed loop obtained by concatenating a long flow segment with a short arc. More concretely, an \emph{$(\epsilon, T)$-cycle} consists of flow segment of the form $x \cdot [0, t]$ for $t \geq T$, concatenated with arc from $x$ to $x \cdot t$ with length at most $\epsilon$. The Anosov Closing Lemma leverages the contracting/expanding, or ``transversely hyperbolic'' behavior of an Anosov flow to find closed orbits near almost-cycles.

\begin{anosov_closing_lemma}
	Let $\Phi$ be an Anosov flow on a closed $3$-manifold $M$. Then for each $D > 0$ there are constants $T, \epsilon > 0$ such that any $(\epsilon, T)$-cycle contains a closed orbit in its $D$-neighborhood.
\end{anosov_closing_lemma}

An analogous result holds for pseudo-Anosov flows, but one must be careful with almost-cycles that lie near the singular orbits \cite{Mangum}.

The idea behind the Anosov Closing Lemma is illustrated in Figure~\ref{figure:AnosovClosing}. The left side of the figure depicts the local structure near the ends of a long flow segment $x \cdot [0, t]$, while the right side depicts the local structure near a point $x' = x \cdot \frac{t}{2}$ in the middle. Since $x$ is close to $x \cdot t$, the local stable/unstable leaf through $x$ intersects the local unstable/stable leaf through $x \cdot t$.

\begin{figure}[ht]
	\labellist
	\small\hair 2pt
	
	\pinlabel $x$ [t] at 31 83
	\pinlabel ${x \cdot t}$ [r] at 127 77
	\pinlabel $\textcolor{green}{y}$ [l] at 81 103
	\pinlabel \textcolor{red}{stab.} [t] at 14 68
	\pinlabel \textcolor{blue}{unst.} [b] at 14 95
	
	\pinlabel $x'$ [tr] at 270 76
	\pinlabel $\textcolor{green}{y \cdot t_-'}$ [r] at 256 87
	\pinlabel $\textcolor{green}{y \cdot t_+'}$ [l] at 289 87
	
	\endlabellist
	\centering
	\includegraphics{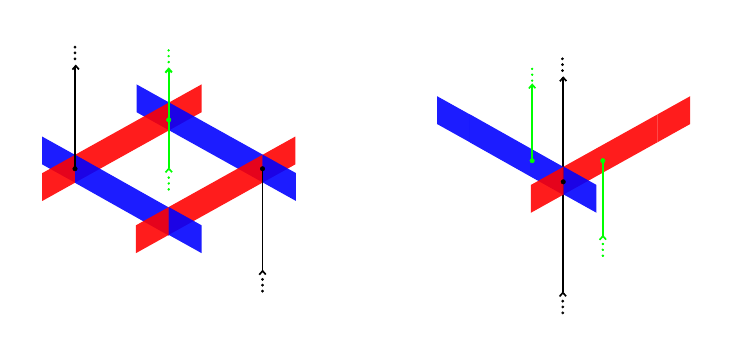}
	\caption{The Anosov closing lemma.} \label{figure:AnosovClosing}
\end{figure}

Take a point $y$ where the stable leaf through $x$ intersects the unstable leaf through $x \cdot t$. Flowing forward, we arrive at a point $y \cdot t_+'$ which lies very close to $x'$ along its stable leaf; flowing backwards, we arrive at a point $y \cdot t_-'$ which lies very close to $x'$ along its unstable leaf. This produces a flow segment $y \cdot [t_-', t_+']$ whose length is comparable to $x \cdot [0, t]$, but whose ends are much closer together, which we can close up to obtain an almost-cycle. Repeating, one obtains a sequence of better and better almost-cycles, which limit to a closed orbit.

\subsection{Coarse transverse hyperbolicity}
At first glance, the tangent condition that defines a quasigeodesic flow seems unrelated to the hyperbolic transverse structure that defines a pseudo-Anosov flow. In the presence of ambient hyperbolicity, however, we will see that a quasigeodesic flows admits an analogous transverse structure that is \emph{coarsely} hyperbolic.

In a pseudo-Anosov flow, a weak stable or unstable leaf is a maximal connected set of orbits that are forwards or backwards asymptotic to each other. In a quasigeodesic flow, one defines a \emph{weak positive} or \emph{weak negative leaf} to be a maximal connected set of orbits that share their positive or negative endpoints in the universal cover. Any pair of orbits in a weak positive or negative leaf are \emph{coarsely} forwards or backwards asymptotic, in the sense that they contain forwards or backwards rays that lie a uniformly bounded distance apart in the universal cover. 

This ``coarse transverse hyperbolicity'' is a far cry from the strict form of hyperbolicity needed for the Anosov closing lemma. Besides the coarseness of the contraction/expansion, there is little control over the topology of the weak leaves. They may have nontrivial interior, and may not be path-connected or even locally connected. In particular, there is no natural notion of transversality between weak positive and negative leaves. Nevertheless, we will prove a ``Homotopy Closing Lemma,'' which finds closed orbits in the free homotopy classes of certain almost-cycles. This can be seen as a coarse analogue of the Anosov Closing Lemma, which finds closed orbits geometrically near certain almost-cycles.

\subsection{Outline and results}
Fix a quasigeodesic flow $\Phi$ on a closed hyperbolic $3$-manifold $M$, which lifts to a flow $\tPhi$ on the universal cover $\tM \simeq \bH^3$. To avoid confusion, the orbits of $\tPhi$ will be called \emph{flowlines}. In $\tM$, the collections of all weak positive and weak negative leaves form a pair of $\pi_1(M)$-equivariant decompositions, $\cD^+_w$ and $\cD^-_w$.

Because of the topological and geometric difficulties in working with weak leaves, our argument will take a very different form than that of the Anosov Closing Lemma. Instead of working directly in the manifold, we will reduce the $3$-dimensional problem of finding closed orbits to a $2$-dimensional problem in the \emph{flowspace}. This is a topological plane $P$, the orbit space of the lifted flow, which comes equipped with a natural action of $\pi_1(M)$. See \S\ref{section:Leaves}.

Each point in $P$ corresponds to a flowline in $\tM$, which projects to an orbit in $M$, and the periodicity or recurrence of this orbit can be seen in terms of the flowspace action. In particular, a point in $P$ corresponds to a closed orbit in $M$ if and only if it is fixed by some nontrivial element of $\pi_1(M)$. Thus, to show the existence of a closed orbit it suffices to show that the flowspace action is not free.

The flowspace is also useful for understanding the topology of the weak leaves. The decompositions $\cD^\pm_w$ of $\tM$ project to decompositions $\cD^\pm$ of $P$, whose elements are simply called \emph{positive leaves} and \emph{negative leaves}. These \emph{positive and negative decompositions} are monotone, unbounded, and intersect compactly. That is, each positive or negative leaf is a closed, connected, unbounded planar set, and the intersection between a positive leaf and a negative leaf is compact. Note that the unboundedness of the leaves represents a ``transverse unboundedness'' of the weak leaves, which would be difficult to express without using the flowspace. 

We will use these properties to treat the positive and negative decompositions as a generalization of a pair of singular foliations. This analogy is realized by ignoring the local topology of leaves and focusing on their separation properties, which can be understood using Calegari's \emph{universal circle} \cite{Calegari}, a topological circle $S^1_u$, equipped with a faithful action of $\pi_1(M)$. Although the original construction of this circle is abstract, coming from a cyclic ordering of the Freudenthal ends of leaves, we showed in \cite{Frankel_qgflows} that it can be thought of as the boundary of the flowspace. That is, the disjoint union $\hP = P \sqcup S^1_u$ has a natural topology with which it is homeomorphic to a closed disc, and the flowspace and universal circle actions combine to form an action $\pi_1(M) \acts \hP$.

In \S\ref{section:Sprigs}, we will see that decompositions $\cD^\pm$ of $P$ extend to decompositions $\hcD^\pm$ of $\hP$ whose elements are called \emph{positive sprigs} and \emph{negative sprigs}. Each sprig has a natural set of \emph{ends}, the points at which it intersects the universal circle, and the separation properties of a sprig are reflected in those of its ends. Although sprigs contain leaves, and hence display the same kinds of topological pathologies, they are still easier to deal with since they are compact and have nice convergence properties.

In \S\ref{section:MasterSprigs}, we study the relationship between the positive and negative sprig decompositions, and show that they form a \emph{spidery pair} (cf. Definitions~\ref{definition:SpideryDecomposition} and \ref{definition:SpideryPair}), whose topological properties are studied in \S\ref{section:Separation}--\ref{section:Linking}. Although there is no natural notion of a transverse intersection point between a positive and negative sprig, there is a generalization of this idea called a \emph{linked point}. A point $p \in P$ is said to be linked if the ends of its positive and negative sprigs contain $0$-spheres that are linked in the $1$-sphere $S^1_u$. We show that the set $P_l \subset P$ of linked points has the following properties.

\begin{recurrent_links_lemma}
	The linked region $P_l$ is closed, nontrivial, $\pi_1(M)$-invariant, and contains an $\omega$-recurrent point.
\end{recurrent_links_lemma}

Here, an \emph{$\omega$-recurrent point} is one that corresponds to an $\omega$-recurrent orbit in $M$. If $p \in P$ is $\omega$-recurrent, then a sequence of almost-cycles $(\gamma_i)$ that approximate the corresponding forward orbit are represented by a sequence of elements $(g_i)$ in the fundamental group, called an \emph{$\omega$-sequence}, with the property that $\lim g_i(p) = p$.

In \S\ref{section:CoarseHyperbolicity}, we show that the coarse contraction of positive sprigs is reflected in the action of an $\omega$-sequence on the flowspace. In particular, if $p \in P$ is an $\omega$-recurrent point, then a corresponding $\omega$-sequence $(g_i)$ takes any point $p' \in P$ in the same positive sprig towards an \emph{a priori} determined compact region. We use this in \S\ref{section:ClosedOrbits} to prove our closing lemma.
 
\begin{homotopy_closing_lemma}
	Let $p \in P_l$ be an $\omega$-recurrent point, and let $(g_i)_{i = 1}^\infty$ be a corresponding $\omega$-sequence. Then for all $i$ sufficiently large, $g_i$ fixes a point in $P$.
\end{homotopy_closing_lemma}

Since a point in $P$ that is fixed by a nontrivial element of $\pi_1(M)$ corresponds to a closed orbit, our main theorem follows immediately.

\begin{closed_orbits_theorem}
	Every quasigeodesic flow on a closed hyperbolic $3$-manifold contains a closed orbit.
\end{closed_orbits_theorem}

In addition, we will see in \S\ref{subsection:ClosedOrbitsUC} that certain closed orbits can be seen purely in terms of the universal circle.

\subsection{Acknowledgements}
I would like to thank the referee, whose comments and suggestions contributed greatly to the readability of this paper. Thanks to Thierry Barbot, Thomas Barthelme, Danny Calegari, Sergio Fenley, Dave Gabai, Yair Minsky, Lee Mosher, and Juliana Xavier for many productive conversations. 

This material is based partially upon work supported by the National Science Foundation under Grant No. DMS-1611768. Any opinions, findings, and conclusions or recommendations expressed in this material are those of the author and do not necessarily reflect the views of the National Science Foundation.

\section{Topological background}
In this section we will review some topological background that will be used throughout the sequel. See \cite{Wilder}, \cite{Kuratowski1}, \cite{Kuratowski2}, and \cite{HockingYoung} for more details.

%
%
%
%

\subsection{Limits}
Let $(A_i)_{i=1}^\infty$ be a sequence of subsets of a metric space $X$. The \emph{limit inferior} 
\[ \varliminf A_i \]
is the set of all $x \in X$ such that each neighborhood of $x$ intersects all but finitely many of the $A_i$. The \emph{limit superior} 
\[ \varlimsup A_i \]
is the set of all $x \in X$ such that every neighborhood of $x$ intersects infinitely many of the $A_i$.

In other words, $x \in \varliminf A_i$ if and only if there is a sequence of points $x_i \in A_i$ that converge to $x$, and $y \in \varlimsup A_i$ if and only if there is a sequence of points $y_i \in A_i$ that accumulate on $y$. The limits inferior and superior are always closed, and 
\[ \varliminf A_i \subset \varlimsup A_i. \]
If the limits inferior and superior of agree, then $(A_i)$ is said to be \emph{(Kuratowski) convergent}, and we write
\[ \lim A_i := \varliminf A_i = \varlimsup A_i. \]

When $X$ is a compact metric space, this is equivalent to Hausdorff convergence, and we can avail ourselves of the following useful properties of Hausdorff limits \cite[\S 2]{HockingYoung}.

\begin{theorem}
	Let $X$ be compact metrizable space $X$. Then:
	\begin{enumerate}
		\item[(1)] Every sequence of subsets has a convergent subsequence.
		\item[(2)] If $(A_i)$ is a sequence of connected subsets, and $\varliminf A_i$ is not empty, then $\varlimsup A_i$ is connected.
		\item[(2')] If $(A_i)$ is a convergent sequence of connected subsets then $\lim A_i$ is connected.
	\end{enumerate}
\end{theorem}

\subsection{Decompositions}\label{subsection:Decompositions}
Let $X$ be a topological space. A \emph{partition} of $X$ is a collection of pairwise disjoint subsets that cover $X$. A \emph{decomposition} of $X$ is a partition whose elements are closed. A partition or decomposition is \emph{nontrivial} if it contains more than one element, and \emph{monotone} if its elements are connected.

Let $\cD$ be a decomposition of a space $X$. A subset $A \subset X$ is said to be \emph{$\cD$-saturated} if each decomposition element $K \in \cD$ that intersects $A$ is contained in $A$; equivalently, if $A$ is a union of decomposition elements. The \emph{$\cD$-saturation} of a subset $B \subset X$ is the smallest $\cD$-saturated set that contains $B$, which we denote by $\cD(B)$; equivalently, $\cD(B)$ is the union of all decomposition elements $K \in \cD$ that intersect $B$. When the decomposition $\cD$ is implicit, we will speak simply of \emph{saturated} sets and \emph{saturations}.

Given a decomposition $\cD$ of a space $X$, the corresponding \emph{decomposition space} is the identification space $\rquotient{X}{\cD}$, equipped with the quotient topology. Alternatively, one can think of the decomposition space as the set $\cD$ itself, with the topology defined by declaring that $U \subset \cD$ is open whenever $\bigcup_{K \in U} K$ is open in $X$.

\begin{definition}[\hspace{1sp}{\cite[\S I.19]{Kuratowski1}}]\label{definition:USC}
	A decomposition $\cD$ of a space $X$ is \emph{upper semicontinuous} if it satisfies the following equivalent conditions:
	\begin{enumerate}
		\item The quotient map $X \to \rquotient{X}{\cD}$ is closed (i.e. the image of every closed set is closed).
		
		\item $U \subset X$ is open $\Rightarrow$ the union of all decomposition elements that are contained in $U$ is open.
		
		\item $A \subset X$ is closed $\Rightarrow$ the union of all decomposition elements that intersect $A$ is closed. Equivalently, the saturation of every closed set is closed.
	\end{enumerate}
\end{definition}

In a compact metric space, the upper semicontinuity of a decomposition can be understood in terms of the convergence properties of its elements.

\begin{theorem}[\hspace{1sp}{\cite[Theorem~IV.43.2]{Kuratowski2}}]\label{theorem:USCCM}
	Let $\cD$ be a decomposition of a compact metrizable space $X$. The following are equivalent:
	\begin{enumerate}
		\item $\cD$ is upper semicontinuous.
		
		\item If $K_1, K_2, \cdots \in \cD$ is a sequence of decomposition elements, and $\varliminf K_i$ intersects a decomposition element $K \in \cD$, then $\varlimsup K_i \subset K$.
		
		\item If $K_1, K_2, \cdots \in \cD$ is a convergent sequence of decomposition elements, then $\lim K_i$ is contained in some decomposition element $K \in \cD$.
	\end{enumerate}
\end{theorem}

\begin{lemma}[\hspace{1sp}{\cite[Theorem~IV.43.1]{Kuratowski2}}]\label{lemma:USCCH}
	Let $\cD$ be an upper semicontinuous decomposition of a compact Hausdorff space $X$. Then the decomposition space $\rquotient{X}{\cD}$ is compact Hausdorff.
\end{lemma}

The following easy lemmas are useful for constructing upper semicontinuous decompositions.

\begin{lemma}[\hspace{1sp}{\cite[Theorem~3-31]{HockingYoung}}]\label{lemma:PreimageUSC}
	If $f: X \to Y$ is a continuous map between compact metrizable spaces, then the decomposition $\{f^{-1}(y) \mid y \in Y\}$ by point-preimages is upper semicontinuous.
\end{lemma}

The \emph{monotonization} of a decomposition $\cD$ is the decomposition whose elements are the connected components of elements of $\cD$.

\begin{lemma}[\hspace{1sp}{\cite[Theorem~3-39]{HockingYoung}}]\label{lemma:MonotoneUSC}
	In a compact metrizable space, the monotonization of an upper semicontinuous decomposition is upper semicontinuous.
\end{lemma}

If $\cD$ is a nontrivial monotone upper semicontinuous decomposition of a closed interval $I$, then it's easy to see that the associated decomposition space $\rquotient{I}{\cD}$ is homeomorphic to a closed interval. The following theorem generalizes this to dimension $2$.

\begin{moores_theorem}[\hspace{1sp}\cite{Moore}]
	Let $\cD$ be a monotone upper semicontinuous decomposition of a closed $2$-disc $\mathbf{D}$, and suppose that each decomposition element $K \in \cD$ is nonseparating in $\mathbf{D}$. Then the decomposition space $\rquotient{\mathbf{D}}{\cD}$ is homeomorphic to a closed $2$-disc.
\end{moores_theorem}

This fails in higher dimensions.

\section{Leaves}\label{section:Leaves}
Fix, once and for all, a quasigeodesic flow $\Phi$ on a closed hyperbolic $3$-manifold $M$. We assume throughout that $M$ is orientable; this does not result in a loss of generality since passing to a double cover does not affect the existence of closed orbits.

We will work mostly with the lifted flow $\tPhi$ on the universal cover $\tM \simeq \bH^3$, whose orbits we call \emph{flowlines}. Since $M$ is compact, each nontrivial element $g \in \pi_1(M)$ acts as a loxodromic isometry on $\tM$, and has two fixed points in $S^2_\infty$ in an attracting-repelling pair. Since $M$ is closed, the action on $S^2_\infty$ is minimal, in the sense that every orbit is dense. See \cite{Thurston}.

The Morse Lemma implies that each flowline has well-defined and distinct endpoints in $S^2_\infty$. These endpoints vary continuously \cite[Lemma~4.3]{Calegari}, so we have a pair of $\pi_1(M)$-equivariant maps
\begin{align*}
	E^\pm: \tM &\to S^2_\infty \\
			x  &\mapsto \lim_{t \to \pm \infty} x \cdot t.
\end{align*}
Since quasigeodesics have distinct endpoints, we have
\[ E^+(x) \neq E^-(x) \text{ for all } x \in \tM. \]

Fix a point $z \in S^2_\infty$. Then $(E^+)^{-1}(z)$ is the union of all flowlines with positive endpoint at $z$, and each connected component of this set is called a \emph{weak positive leaf rooted at $z$}. Similarly, $(E^-)^{-1}(z)$ is the union of all flowlines with negative endpoint at $z$, each component of which is called a \emph{weak negative leaf rooted at $z$}.

The collections of all weak positive and weak negative leaves form a pair of monotone decompositions of $\tM$,
\begin{align*}
	&\cD^+_w := \{ \text{components of } (E^+)^{-1}(z) \mid z \in S^2_\infty \} \\
	\text{and}\hspace{.1in} &\cD^-_w := \{ \text{components of } (E^-)^{-1}(z) \mid z \in S^2_\infty \}.
\end{align*}
The action of $\pi_1(M)$ preserves these decompositions, so they descend under the covering map $\pi: \tM \to M$ to a pair of monotone partitions of $M$. These are only partitions, not decompositions, since the image of a weak leaf need not be closed.

A quasigeodesic flow on a closed hyperbolic manifold is always uniformly quasigeodesic \cite[Lemma~3.10]{Calegari}, so there is a uniform constant $C := C(k, \epsilon)$ such that the flowline through each $x \in \tM$ lies in the $C$-neighborhood of the geodesic from $E^-(x)$ to $E^+(x)$. We will use this in \S\ref{section:CoarseHyperbolicity} to see that the the flowlines in each weak positive/negative leaf are coarsely asymptotic in the forwards/backwards direction.

\subsection{The flowspace}\label{subsection:Flowspace}
Let $P$ be the orbit space of the lifted flow $\tPhi$, i.e. the set of flowlines in $\tM$ together with quotient topology induced by the map
\[ \nu: \tM \to P. \]
The deck group preserves the decomposition of $\tM$ into flowlines, so there is an induced action $\pi_1(M) \acts P$. The space $P$, together with this action, is called the \emph{flowspace} of $\Phi$.

Using uniform quasigeodesity, Calegari showed that $P$ is Hausdorff, and therefore homeomorphic to the plane \cite[Theorem~3.12]{Calegari}. The deck transformations preserve the orientation on the flowlines, as well as the orientation on the universal cover, so the flowspace action is orientation-preserving.

Although the flowspace is constructed as a quotient of the universal cover, we can use the following theorem to think of it as a transversal to the lifted flow.

\begin{theorem}[Montgomery-Zippin \cite{MontgomeryZippin}]\label{theorem:MontgomeryZippin}
	Let $\Psi$ be a flow on $\bR^3$ whose orbit space $P$ is Hausdorff. Then the quotient map $\nu: \bR^3 \to P$ admits a continuous section $\sigma: P \to \bR^3$.
\end{theorem}

Given such a section
\[ \sigma: P \to \tM, \]
the map
\begin{align*}
	\Sigma: P \times \bR &\to M 	\\
			(p, h) 		 &\mapsto \sigma(p) \cdot h
\end{align*}
is a homeomorphism that conjugates the ``vertical flow'' on $P \times \bR$, defined by $(p, h) \cdot t = (p, h + t)$, to the lifted flow $\tPhi$ on $\tM$.

We will write 
\[ \flow{p} := \nu^{-1}(p) \]
for the flowline corresponding to a point $p \in P$, and 
\[ \flow{A} := \nu^{-1}(A) \]
for the union of flowlines corresponding to a subset $A \subset P$. From the homeomorphism $\Sigma$ one sees that $\flow{A}$ is homeomorphic to $A \times \bR$ for any $A \subset P$.

\subsection{Leaves}
Let
\[ e^\pm: P \to S^2_\infty \]
be the maps that take each point $p \in P$ to the positive and negative endpoints of the corresponding flowline $\flow{p}$. These are just the factorizations of $E^\pm$ through the quotient map $\nu$, so they are continuous, $\pi_1(M)$-equivariant, and satisfy
\[ e^+(p) \neq e^-(p) \text{ for all } p \in P. \]
For each $z \in S^2_\infty$ the connected components of $(e^+)^{-1}(z)$ are called \emph{positive leaves rooted at $z$}, while the connected components of $(e^-)^{-1}(z)$ are called \emph{negative leaves rooted at $z$}. Equivalently, a positive or negative leaf rooted at $z$ is the image under $\nu$ of a weak positive or weak negative leaf rooted at $z$. See Figure~\ref{figure:Flowspace}.

\begin{figure}[h]
	\vspace{.2in}
	\labellist
	\small\hair 2pt
	
	\pinlabel ${\pi_1(M) \acts P}$ [t] at 80 8
	\pinlabel $\textcolor{red}{(e^+)^{-1}(z)}$ [t] at 62 143
	\pinlabel $\textcolor{blue}{(e^-)^{-1}(w)}$ [t] at 86 47
	\pinlabel $p$ [tl] at 83 88
	
	\pinlabel $e^+$ [b] at 176 130
	\pinlabel $\partial$ at 205 123
	\pinlabel $\nu$ [b] at 180 91
	\pinlabel $e^-$ [t] at 176 49
	\pinlabel $\partial$ at 205 56
	
	\pinlabel ${\pi_1(M) \acts \tM \cup S^2_\infty}$ [t] at 279 8
	\pinlabel ${z = e^+(p)}$ [b] at 279 163
	\pinlabel ${w = e^-(p)}$ [t] at 279 17
	\pinlabel $\textcolor{dkgreen}{\flow{p}}$ [l] at 293 95
	
	\endlabellist
	\centering
	\includegraphics{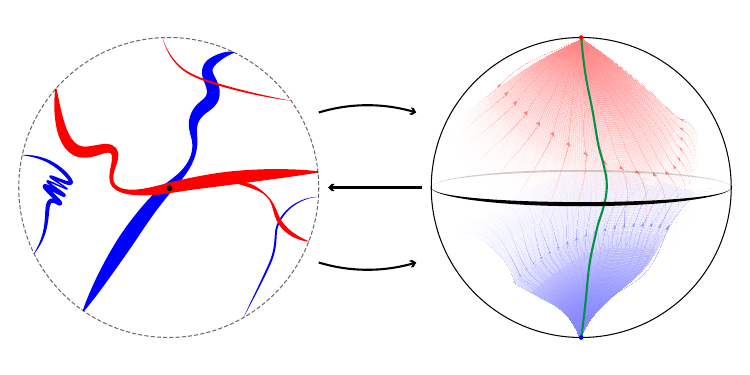}
	\caption{The positive and negative leaves through a point $p \in P$.} \label{figure:Flowspace}
\end{figure}

The collections of all positive and negative leaves form a pair of $\pi_1(M)$-invariant monotone decompositions of $P$, the \emph{positive} and \emph{negative decompositions}
\begin{align*}
							&\cD^+ := \{ \text{components of } (e^+)^{-1}(z) \mid z \in S^2_\infty \} \\
	\text{and}\hspace{.1in} &\cD^- := \{ \text{components of } (e^-)^{-1}(z) \mid z \in S^2_\infty \}.
\end{align*}
These have two important properties.

\begin{proposition}[\hspace{1sp}{\cite[Lemmas~4.8 \& 5.8]{Calegari}}]\
	\begin{enumerate}
		\item Each leaf $K \in \cD^+ \cup \cD^-$ is unbounded.
		\item If $K \in \cD^+$ is a positive leaf and $L \in \cD^-$ is a negative leaf, then $K \cap L$ is compact (and possibly empty).
	\end{enumerate}
\end{proposition}

We say that $\cD^+$ and $\cD^-$ are \emph{unbounded decompositions} that \emph{intersect compactly}. We will use these properties to treat the positive and negative decompositions as a broad generalization of a pair of foliations. Property (2) can be generalized as follows.

\begin{lemma}\label{lemma:BoundedSubsetsOfP}
	Let $A$ and $B$ be disjoint compact subsets of $S^2_\infty$. Then $(e^-)^{-1}(A) \cap (e^+)^{-1}(B)$ is compact.
\end{lemma}
\begin{proof}
	By uniform quasigeodesity, $(E^-)^{-1}(A) \cap (E^+)^{-1}(B)$ is contained in the $C$-neighborhood of the union of all geodesics from $A$ to $B$. Since $A$ and $B$ are compact and disjoint, we can find a compact set $D \subset \tM$ that intersects every one of these flowlines. Then $(e^-)^{-1}(A) \cap (e^+)^{-1}(B)$ compact since it is a closed subset of the compact set $\nu(D)$.
\end{proof}

\begin{remark}\label{remark:pA_FlowspaceFoliations}
	Let $\Psi$ be a pseudo-Anosov flow on a closed $3$-manifold $N$, not necessarily hyperbolic. Then the flowspace $\pi_1(N) \acts P$, defined in the same manner, is also a topological plane. The $2$-dimensional singular weak stable and unstable foliations lift to the universal cover and project to $1$-dimensional singular foliations of the flowspace $P$ which we'll call the \emph{stable and unstable foliations}. See \cite{Fenley}.
	
	If $N$ is hyperbolic, and $\Psi$ is quasigeodesic in addition to pseudo-Anosov, then the positive and negative decompositions that come from its quasigeodesic structure are exactly the stable and unstable foliations that come from its pseudo-Anosov structure.
\end{remark}

\subsection{Dynamics in the flowspace}\label{subsection:DynamicsInP}
Each point $p \in P$ corresponds to a flowline $\flow{p}$ in $\tM$, and an orbit $\pi(\flow{p})$ in $M$.

\begin{lemma}
	A point $p \in P$ corresponds to a closed orbit in $M$ if and only if there is a nontrivial element $g \in \pi_1(M)$ such that $g(p) = p$. Any such $g$ represents a multiple of the free homotopy class of the corresponding orbit.
\end{lemma}
\begin{proof}
	Let $g$ be a nontrivial element of $\pi_1(M)$ that fixes $p$. Then $g$ fixes the corresponding flowline $\flow{p} \subset \tM$, so the image $\pi(\flow{p})$ is a closed orbit, and $g$ represents a multiple of its free homotopy class.
	
	If $\pi(\flow{p})$ is closed, then it is homotopically nontrivial, since it has a lift $\flow{p}$ that is homeomorphic to a line. Take a point $x \in \flow{p}$ as the basepoint for $\tM$, and $x_0 = \pi(x)$ as the basepoint for $M$. Then an element $g \in \pi_1(M, x_0)$ that represents its homotopy class is nontrivial, and fixes $\flow{p}$ and hence $p$.
\end{proof}

Thus the Closed Orbits Theorem reduces to showing that the flowspace action is not free.

A point $x \in M$ is called \emph{$\omega$-recurrent} (\emph{$\alpha$-recurrent}) if there is a sequence of times $t_i \to \infty$ (resp. $t_i \to -\infty$) such that $\lim_{i \to \infty} x \cdot t_i = x$. If $x, y \in M$ are in the same orbit, then $x$ is $\alpha$- or $\omega$-recurrent if and only if $y$ is, so we can speak of \emph{orbits} being $\alpha$- or $\omega$-recurrent. A \emph{recurrent} point or orbit is one that is either $\alpha$- or $\omega$-recurrent. A point $p \in P$ is said to be \emph{recurrent}, \emph{$\alpha$-recurrent}, or \emph{$\omega$-recurrent} when this holds for the corresponding orbit in $M$.

\begin{lemma}
	A point $p \in P$ is recurrent if and only if there is a sequence of nontrivial elements $g_1, g_2, \cdots \in \pi_1(M)$ such that $\lim_{i \to \infty} g_i(p) = p$.
\end{lemma}
\begin{proof}
	Take a point $x \in \flow{p}$ as the basepoint for $\tM$, and $x_0 = \pi(x)$ as the basepoint for $M$.
	
	If $\pi(\flow{p})$ is $\omega$-recurrent, then we can find a sequence of times $t_i \to \infty$ such that $\lim_{i \to \infty} x_0 \cdot t_i = x_0$. For each $i$, let $g_i$ be the element of $\pi_1(M, x_0)$ that represents the almost-cycle obtained by concatenating the flow segment $x_0 \cdot [0, t_i]$ with an arc $c_i$ of shortest possible length. The corresponding lift is the concatenation of $x \cdot[0, t_i]$ with the lift $\widetilde{c}_i$ of $c_i$ that starts at $x \cdot t_i$, and $g_i$ takes the terminal endpoint of $\widetilde{c}_i$ to $x$. The length of $\widetilde{c}_i$ goes to $0$, so we have $\lim_{i \to \infty} g_i(x \cdot t_i) = x$, and hence $\lim_{i \to \infty} g_i(p) = p$. A similar argument applies if $p$ is $\alpha$-recurrent.
		
	On the other hand, suppose that we have a sequence $(g_i)_{i = 1}^\infty$ of nontrivial elements of $\pi_1(M)$ such that $\lim_{i \to \infty} g_i(p) = p$. Then we can find a sequence of times $t_i$ such that $\lim_{i \to \infty} g_i(x \cdot t_i) = x$, which means that $\lim_{i \to \infty} x_0 \cdot t_i = x_0$. Since the injectivity radius of $M$ is bounded below, it follows that the $t_i$ are unbounded, so we can take a subsequence with either $t_i \to \infty$ or $t_i \to -\infty$. Thus $p$ is either $\omega$- or $\alpha$-recurrent.
\end{proof}

Such a sequence $(g_i)_{i = 1}^\infty$ is called a \emph{recurrence sequence} for the recurrent point $p$. Note that $p$ is $\omega$-recurrent ($\alpha$-recurrent) if and only if we can find a recurrence sequence with the property that $\lim_{i \to \infty} g_i(x \cdot t_i) = x$ for a point $x \in \flow{p}$ and a sequence of times $t_i \to \infty$ (resp. $t_i \to -\infty$). Such a sequence is called an \emph{$\omega$-sequence} (resp. \emph{$\alpha$-sequence}) for $p$. 

More generally, we will denote the $\omega$- and $\alpha$-limit sets of a point $x \in M$ by $\omega(x)$ and $\alpha(x)$. That is, $y \in \omega(x)$ ($y \in \alpha(x)$) if and only if there is a sequence of times $t_i \to \infty$ (resp. $t_i \to -\infty$) such that $\lim_{i \to \infty} x \cdot t_i = y$. As with recurrence, we can treat $\omega$- and $\alpha$-limit sets on the level of orbits. Note that a point or orbit is $\omega$- or $\alpha$-recurrent if and only if it is contained in its own $\omega$- or $\alpha$-limit set. In the flowspace, we write $q \in \omega(p)$ or $q \in \alpha(p)$ whenever this holds for the corresponding orbits in $M$.

\begin{lemma}
	Let $p, q \in P$. Then $q \in \omega(p) \cup \alpha(p)$ if and only if there is a sequence of nontrivial elements $g_1, g_2, \cdots \in \pi_1(M)$ such that $\lim_{i \to \infty} g_i(q) = p$.
\end{lemma}

The proof is similar to that of the preceding lemma, and we can extend the notions of $\omega$- and $\alpha$-sequences in the obvious way. In particular, $q \in \omega(p)$ if and only if there is a sequence $(g_i)_{i = 1}^\infty$ such that $\lim_{i \to \infty} g_i(x \cdot t_i) = y$ for points $x \in \flow{p}$ and $y \in \flow{q}$ and times $t_i \to \infty$. We call this an \emph{$\omega$-sequence} for $q \in \omega(p)$.

In the sequel, we will restrict attention to $\omega$-sequences, though all of our results have corresponding versions for $\alpha$-sequences.

\section{Sprigs}\label{section:Sprigs}
A \emph{compactification} of a space $X$ consists a compact space $Y$, together with an identification of $X$ with a dense subset of $Y$. In \cite{Frankel_qgflows}, we showed that any finite collection $\{\cD^i\}_{i = 1}^k$ of unbounded decompositions of a plane $P$ that intersect compactly determines a \emph{universal compactification} to a closed disc $\hP = P \sqcup S^1_u$ with interior $P$ and boundary circle $S^1_u$. This may be characterized by the following properties:
\begin{enumerate}
	\item the closure of each $K \in \bigcup_i \cD^i$ in $\hP$ intersects the boundary circle in a nontrivial totally disconnected set $\overline{K} \cap S^1_u$,
	
	\item \[ \bigcup_{K \in \bigcup_i \cD^i} \overline{K} \cap S^1_u \]
	is dense in $S^1_u$,
	
	\item any other compactification with these properties is a quotient of $\hP$.
\end{enumerate}
It follows that any group action $\Gamma \acts P$ that preserves each $\cD^i$ extends uniquely to an action on the corresponding universal compactification.

In particular, this construction can be applied to the decompositions $\cD^\pm$ that come from a quasigeodesic flow (in which case the boundary circle, together with the restricted action, is identified with the universal circle constructed by Calegari in \cite{Calegari}). In \cite{Frankel_spherefilling}, we showed that the endpoint maps $e^\pm: P \to S^2_\infty$ extend uniquely to $\pi_1(M)$-equivariant maps $\he^\pm: \hP \to S^2_\infty$. These agree on the boundary circle, in the sense that $\he^+(s) = \he^-(s)$ for all $s \in S^1_u$.

\subsection{The compactified flowspace}
For convenience, we will work with the variant of this compactification provided by the following theorem.

\begin{theorem}\label{theorem:CompactifiedFlowspace}
	There is a compactification of $P$ to a closed disc $\hP = P \sqcup S^1_u$ with the following properties:
	\begin{enumerate}		
		\item the flowspace action $\pi_1(M) \acts P$ extends uniquely to an action $\pi_1(M) \acts \hP$;
		
		\item the endpoint maps $e^\pm: P \to S^2_\infty$ extend uniquely to $\pi_1(M)$-equivariant maps
		\[ \he^\pm: \hP \to S^2_\infty; \]
		
		\item the extended endpoint maps agree on the universal circle, i.e.
		\[ \he^+(s) = \he^-(s) \text{ for all } s \in S^1_u; \]
		and
		
		\item for each $z \in S^2_\infty$,
		\[ (\he^+)^{-1}(z) \cap S^1_u = (\he^-)^{-1}(z) \cap S^1_u\]
		is totally disconnected.
	\end{enumerate}
\end{theorem}
\begin{proof}
	In \cite[Theorem~7.9 \& Construction~5.7]{Frankel_qgflows} and \cite[Theorem~2.10 \& Proposition~2.11]{Frankel_spherefilling}, we constructed a compactification $\hP = P \sqcup S^1_u$ of $P$ with extended endpoint maps $\he^\pm: \hP \to S^2_\infty$ that satisfies properties (1)--(3). To obtain (4) we will pass to a quotient of this compactification.
	
	Let
	\[ \mathcal{C} := \{ \text{components of } (\he^+)^{-1}(z) \cap S^1_u \mid z \in S^2_\infty \} \cup  \{ \{p\} \mid p \in P \}.\]
	This is a monotone upper semicontinuous of $\hP$ by nonseparating subsets, so Moore's Theorem says that the decomposition space $\rquotient{\hP}{\mathcal{C}}$ is a closed disc.
	
	The decomposition $\mathcal{C}$ is trivial on the interior of $\hP$, so the interior of the quotient is still identified with $P$. It is preserved by the action of $\pi_1(M)$, so there is an induced action on the quotient. Each element of $\mathcal{C}$ maps to a single point under $\he^+$ and $\he^-$, so these descend to $\pi_1(M)$-equivariant maps on the quotient. Thus the theorem is satisfied after replacing $\hP$ by $\rquotient{\hP}{\mathcal{C}}$.
\end{proof}

The space $\hP$, together with the action of $\pi_1(M)$, is called the \emph{compactified flowspace}. The boundary circle $S^1_u$, together with the restricted action, is called the \emph{universal circle}.

Since the extended endpoint maps agree on the universal circle, we will denote their mutual restriction by
\[ \he: S^1_u \to S^2_\infty. \]
This is $\pi_1(M)$-equivariant, and since the action on $S^2_\infty$ is minimal, its image must be the entire sphere at infinity. This generalizes the Cannon-Thurston Theorem, which produces such equivariant sphere-filling curves for fibered hyperbolic $3$-manifolds \cite{CannonThurston}.

\subsection{Ends of leaves}
Given a subset $A \subset \hP$, we define $\partial A := A \cap S^1_u$. Given a positive or negative leaf $K \in \cD^\pm$, the points in $\partial \overline{K}$ are called \emph{ends} of $K$.\footnote{Our usage of the word ``end'' differs slightly from that of \cite{Frankel_qgflows} and \cite{Frankel_spherefilling}, where it refers to a \emph{Freudenthal end}. Each Freudenthal end of a leaf $K$ maps to a point in the universal circle, and $\partial \overline{K}$ is the closure of the image of $K$'s Freudenthal ends. We will not need this, but the reader may refer to \cite[Lemma~7.8]{Frankel_qgflows} for more details.}

\begin{lemma}
	Each leaf has a nontrivial and totally disconnected set of ends.
\end{lemma}
\begin{proof}
	Nontriviality follows from that fact that the leaves are unbounded subsets of $P$. The ends of a leaf rooted at $z$ are contained in the totally disconnected set $\he^{-1}(z)$, and hence totally disconnected.
\end{proof}

Although distinct positive leaves $K, L \in \cD^+$ are disjoint in the flowspace, their closures may intersect in the universal circle, so that they ``share an end'' $s \in \partial \overline{K} \cap \partial \overline{L}$. Negative leaves may also share ends, and a positive leaf may share an end with a negative leaf.

\begin{lemma}\label{lemma:LeavesSharingEndsRoots}
	Any two leaves (both positive, both negative, or one of each) that share an end are rooted at the same point.
\end{lemma}
\begin{proof}
	If $s$ is an end of a leaf $K \in \cD^\pm$, then $K$ is rooted at $\he(s)$. Indeed, if $K$ is a positive leaf then its root is $e^+(K) = \he^+(\overline{K}) = \he^+(s) = \he(s)$, and if $K$ is a negative leaf then its root is $e^-(K) = \he^-(\overline{K}) = \he^-(s) = \he(s)$. 
	
	Therefore, two leaves $K$ and $L$ that share an end $s \in \partial \overline{K} \cap \partial \overline{L}$ are both rooted at $\he(s)$.
\end{proof}

On the other hand, positive and negative cannot simultaneously intersect and share ends.

\begin{lemma}\label{lemma:LeavesSharingEndsDisjoint}
	If $K \in \cD^+$ and $L \in \cD^-$ share an end, then $K \cap L = \emptyset$.
\end{lemma}
\begin{proof}
	If $K$ and $L$ share an end $s \in \partial \overline{K} \cap \partial \overline{L}$, then we have $e^+(K) = \he(s) = e^-(L)$ by the preceding lemma. Then $K$ and $L$ must be disjoint, since any point $p \in K \cap L$ would have $e^+(p) = e^-(p)$, which contradicts the fact that quasigeodesics have distinct endpoints.
\end{proof}

\subsection{Sprigs}\label{subsection:Sprigs}
Since distinct positive/negative leaves can share ends, the collection of positive/negative leaf-closures does not form a decomposition of the compactified flowspace. In addition, there may be points in $S^1_u$ that are not in the closure of a positive or negative leaf. However, there are natural decompositions of $\hP$ obtained using the extended endpoint maps.

For each $z \in S^2_\infty$, the connected components of $(\he^+)^{-1}(z)$ and $(\he^-)^{-1}(z)$ are called, respectively, \emph{positive sprigs rooted at $z$} and \emph{negative sprigs rooted at $z$}. The collections of all positive and negative sprigs form a pair of $\pi_1(M)$-invariant monotone decompositions of $\hP$, the positive and negative \emph{sprig decompositions}
\begin{align*}
	\hcD^+ &:= \{ \text{components of } (\he^+)^{-1}(z) \mid z \in S^2_\infty \} \\
	\hcD^- &:= \{ \text{components of } (\he^-)^{-1}(z) \mid z \in S^2_\infty \}.
\end{align*}

Let $\hK \in \hcD^\pm$ be a positive or negative sprig. Then the points in $\partial \hK = \hK \cap S^1_u$ are called \emph{ends} of $\hK$, while $\chK := \hK \cap P$ is called its \emph{bounded part}. Note that
\[ \hK = \chK \sqcup \partial \hK. \]

\begin{lemma}\label{lemma:EndsOfSprigs}
	Each sprig has a nontrivial and totally disconnected set of ends.
\end{lemma}

The proof is the same as in the case of leaves. A sprig $\hK \in \hcD^\pm$ is said to be \emph{trivial} if $\chK = \emptyset$, and \emph{nontrivial} otherwise.

\begin{lemma}\label{lemma:TrivialNontrivialSprig}
	Let $\hK \in \hcD^\pm$ be a positive/negative sprig rooted at $z$.
	\begin{enumerate}
		\item If $\hK$ is trivial, then it consists a single point in $S^1_u$.
		
		\item If $\hK$ is nontrivial, then each component of $\chK$ is a positive/negative leaf rooted at $z$.
	\end{enumerate}
\end{lemma}
\begin{proof}
	Suppose without loss of generality that $\hK$ is a positive sprig. If $\hK$ is trivial, then $\hK = \partial \hK$ it is a connected subset of the totally disconnected set $\he^{-1}(z)$, and hence a single point. If $\hK$ is nontrivial, then each component of $\chK$ is a connected component of $(\he^+)^{-1}(z) \cap S^1_u = (e^+)^{-1}(z)$, which is a a positive sprig rooted at $z$.
\end{proof}

On the other hand:

\begin{lemma}
	Any two positive leaves that share an end are contained in the same positive sprig, and any two negative leaves that share an end are contained in the same negative sprig.
\end{lemma}
\begin{proof}
	If $K$ and $L$ are positive leaves that share an end $s \in \partial \overline{K} \cap \partial \overline{L}$, then $\overline{K} \cup \overline{L}$ is connected. By Lemma~\ref{lemma:LeavesSharingEndsRoots}, $e^+(K \cup L)$ is a single point, so $\he^+(\overline{K} \cup \overline{L})$ is a single point. The same argument holds for negative sprigs using $e^-$ and $\he^-$.
\end{proof}

Distinct positive sprigs are disjoint by definition, as are distinct negative sprigs. However, a positive sprig might share an end with a negative sprig, and we have the following analogue of Lemmas~\ref{lemma:LeavesSharingEndsRoots} and \ref{lemma:LeavesSharingEndsDisjoint}.

\begin{lemma}\label{lemma:SprigsSharingEndsDisjoint}
	Let $\hK \in \hcD^+$ and $\hL \in \hcD^-$ be positive and negative sprigs with $\partial \hK \cap \partial \hL \neq \emptyset$. Then $\he^+(\hK) = \he^-(\hL)$, and $\chK \cap \chL = \emptyset$
\end{lemma}

Conversely, sprigs whose bounded parts intersect must have disjoint ends, which we express as follows.

\begin{corollary}\label{corollary:DisjointEnds}
	For each $p \in P$, we have $\partial \hcD^+(p) \cap \partial \hcD^-(p) = \emptyset$.
\end{corollary}

Here, we are using the notation from \S\ref{subsection:Decompositions}: $\hcD^+(p)$ and $\hcD^-(p)$ are the $\hcD^+$- and $\hcD^-$-saturations of the set $\{p\}$, which are just the positive and negative sprigs through $p$.

As illustrated in Figure~\ref{figure:SequenceOfLeaves}, one can have a sequence of positive leaves that limits on more than one positive leaf. Note, however, that the two limit leaves share an end, and are therefore contained in a single positive sprig. This is a result of the following lemma, which is an immediate consequence of Lemmas~\ref{lemma:PreimageUSC} and \ref{lemma:MonotoneUSC}.

\begin{figure}[h]
	\vspace{.2in}
	\labellist
	\small\hair 2pt

	\endlabellist
	\centering
	\includegraphics{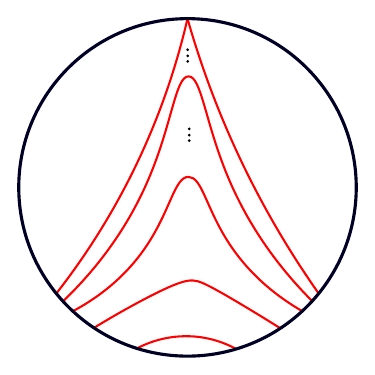}
	\caption{Leaves that split in the limit.} \label{figure:SequenceOfLeaves}
\end{figure}

\begin{lemma}\label{lemma:SprigUSC}
	The sprig decompositions $\hcD^+$ and $\hcD^-$ are upper semicontinuous.
\end{lemma}

In particular, the sprig decompositions have the following properties from Definition~\ref{definition:USC} and Theorem~\ref{theorem:USCCM}.
\begin{enumerate}
	\item Let $\hK_1, \hK_2, \cdots$ be a sequence of positive (resp. negative) sprigs for which $\varliminf \hK_i \neq \emptyset$. Then $\varlimsup \hK_i$ is contained in a single positive (negative) sprig.
	\item Let $p_1, p_2, \cdots$ be a convergent sequence of points in $\hP$. Then $\varlimsup \hcD^\pm(p_i) \subset \hcD^\pm(\lim p_i)$.
	\item If $A \subset \hP$ is closed, then the saturations $\hcD^+(A)$ and $\hcD^-(A)$ are closed.
\end{enumerate}

We can abstract the properties of the sprig decompositions as follows.

\begin{definition}\label{definition:SpideryDecomposition}
	A decomposition $\hcD$ of the closed disc $\mathbf{D} = D \sqcup S^1$ is \emph{spidery} if it is upper semicontinuous, and each decomposition element $\hK \in \hcD$ intersects the boundary circle in a nontrivial totally disconnected set $\hK \cap S^1$.
\end{definition}

We do not yet have a satisfactory notion of transversality between the spidery decompositions $\hcD^\pm$. This is done in \S\ref{subsection:SpideryPair}.

\section{Master sprigs}\label{section:MasterSprigs}
Using the compactified flowspace, we will construct a compactification of $\tM$, called the \emph{flow ideal compactification}, that is especially adapted to the lifted flow. The boundary of this compactification is a $2$-sphere $S^2_u$, called the \emph{universal sphere}, which we will use to understand the relationship between positive and negative sprigs.

\begin{remark}
	Cannon-Thurston constructed flow ideal compactifications for pseudo-Anosov suspension flows \cite{CannonThurston} and Fenley generalized this to all pseudo-Anosov flows \cite{Fenley}.
\end{remark}

\subsection{Master sets}
For each $z \in S^2_\infty$, the set
\[ Z = (\he^+)^{-1}(z) \cup (\he^-)^{-1}(z) \subset \hP, \]
which consists of all positive and negative sprigs rooted at $z$, is called the \emph{master set rooted at $z$}. 

\begin{lemma}\label{lemma:MasterSetsConnected}
	Each master set is connected.
\end{lemma}
\begin{proof}
	Fix a master set $Z$ rooted at a point $z \in S^2_\infty$, and let $D_1 \supset D_2 \supset \cdots$ be a nested sequence of open discs in $S^2_\infty$ centered at $z$, with $\bigcap_{i = 1}^\infty D_i = z$. For each $i$, let $H_i \subset \tM$ be the union of all geodesics with both endpoints in $D_i$.
	
	Each $H_i$ projects to a connected subset $X_i := \nu(H_i)$ of the flowspace, where $p \in X_i$ if and only if $\flow{p}$ intersects $H_i$. The closures $\overline{X}_i$ are compact connected subsets of the compactified flowspace, so $\bigcap_i \overline{X}_i$ is compact and connected. To complete the proof, we will show that $\bigcap_i \overline{X}_i = Z$.
	
	Let us show that $Z \subset \bigcap_i \overline{X}_i$. Let $p \in Z$. If $p \in Z \cap P$, then either $e^+(p) = z$ or $e^-(p) = z$, so we have $p \in X_i \subset \overline{X}_i$ for every $i$, and hence $p \in \bigcap_i \overline{X}_i$. If $p \in Z \cap S^1_u$, then $\he^+(p) = z = \he^-(p)$. Take a sequence of points $p_j \in P$ that converge to $p$, and note that $\lim e^+(p_j) = \he^+(p) = z$. Then after taking a subsequence of these points, we can assume that $e^+(p_j) \subset D_i$ for all $j \geq i$, which means that $p_j \in X_i$ for all $j \geq i$. Then $p = \lim_{j \to \infty} p_j \in \overline{X}_i$ for all $i$, and hence $p \in \bigcap_i \overline{X}_i$.
	
	Now we will show that $\hP \setminus Z$ is disjoint from $\bigcap_i \overline{X}_i$. Observe that if $A \subset S^2_\infty$ is compact, and $z \notin A$, then union of all flowlines with both ends in $A$ is eventually disjoint from $H_i$, since it is contained in the $C$-neighborhood of union of all geodesics with both endpoints in $A$. Alternatively, if $B \subset P$ has the property that $e^+(B) \cup e^-(B)$ is bounded away from $z$, then we eventually have $B \cap X_i = \emptyset$.
	
	Let $p$ be a point in $\hP \setminus Z$, which means that $\he^+(p) \neq z \neq \he^-(p)$. If $p \in P$, then this is the same as $e^+(p) \neq z \neq e^-(p)$, so we can find an open neighborhood $U \subset P$ of $p$ such that $e^+(U) \cup e^-(U)$ is bounded away from $z$, and hence $U$ is eventually disjoint from $X_i$. Then $U$ is eventually disjoint from $\overline{X}_i$, and hence $p \notin \bigcap_i \overline{X}_i$. If $p \in S^1_u$, then we can find an open neighborhood $U \subset \hP$ of $p$ such that $\he^+(U) \cup \he^-(U)$ is bounded away from $z$. Then $e^+(U \cap P) \cup e^+(U \cap P)$ is bounded away from $z$, so $U \cap P$ is eventually disjoint from $X_i$. Any sequence of points in $P$ that approaches a point in $U \cap S^1_u$ is eventually contained in $U \cap P$, so $U$ is eventually disjoint from $\overline{X}_i$, and hence $p \notin \overline{X}$.
\end{proof}

Let $Z$ be a master set rooted at a point $z$. As with sprigs, we write $Z = \mathring{Z} \sqcup \partial Z$, where the points in $\partial Z = Z \cap S^1_u$ are called \emph{ends} of $Z$ and $\mathring{Z} := Z \cap P$ is called its \emph{bounded part}. Since $\partial Z = \he^{-1}(z)$, each master sprig has a nontrivial, closed, and totally disconnected set of ends. We say that $Z$ is \emph{trivial} if $\mathring{Z} = \emptyset$, in which case $Z = \partial Z$ consists of a single point, being a connected subset of a totally disconnected set. 

Note, however, that the master sets do not form a decomposition of $\hP$, since each point $p \in P$ is contained in two master sets, those rooted at $e^+(p)$ and $e^-(p)$.

\subsection{The flow ideal compactification}
Recall from \S\ref{subsection:Flowspace} that we can identify $\tM$ with the open cylinder
\[ C := P \times \bR \]
in such a way that the flowlines of $\tPhi$ correspond to vertical lines $\{p\} \times \bR$ in $C$. We can compactify $C$ by thinking of it as the interior of the closed cylinder
\[ \mathbf{C} := \hP \times \overline{\bR}, \]
where $\overline{\bR} = [-\infty, \infty]$ is the usual two-point compactification of $\bR = (-\infty, \infty)$. The action of $\pi_1(M)$ on $C \simeq \tM$ can be extended to the upper and lower horizontal faces
\[ \partial_\pm \mathbf{C} := \hP \times \{\pm \infty\} \]
of the closed cylinder, since they are identified with the compactified flowspace, but not to the vertical face
\[ \partial_v \mathbf{C} := S^1_u \times \overline{\bR}. \]
However, it does extend to the ``closed lens''
\[ \mathbf{L} := \rquotient{\mathbf{C}}{\{ \{s\} \times \overline{\bR} \mid s \in S^1_u\}} \]
obtained by collapsing the vertical lines in the vertical face. This space, together with the action $\pi_1(M) \acts \mathbf{L}$, is called the \emph{flow ideal compactification}.

Let $S^2_u$ denote the boundary sphere of $\mathbf{L}$, which, together with the action of $\pi_1(M)$, is called the \emph{universal sphere}. This consists of two copies of the compactified flowspace glued along their universal circles. We will denote these by $\hP^+$ and $\hP^-$, and think of them as the upper and lower hemispheres of $S^2_u$. 

\begin{figure}[h]
	\vspace{.2in}
	\labellist
	\small\hair 2pt
	
	\pinlabel ${\partial_+ \mathbf{C}}$ [c] at 30 217
	\pinlabel ${\partial_v \mathbf{C}}$ [c] at 55 180
	\pinlabel $\textcolor{gray}{\partial_- \mathbf{C}}$ [c] at 72 144
	\pinlabel ${\partial \mathbf{C}}$ [t] at 55 134
	
	\pinlabel $\hP^+$ [c] at 182 200
	\pinlabel $\hP^-$ [c] at 182 155
	\pinlabel $S^2_u$ [t] at 180 134
	
	\pinlabel $m$ [b] at 244 180
	
	\pinlabel $S^2_\infty$ [t] at 310 134
	\pinlabel $z$ [tl] at 295 204
	
	\pinlabel $f$ [bl] at 203 114
	
	\pinlabel $\hP$ [t] at 270 8
	
	\pinlabel $\he^\pm$ [br] at 288 114
	
	\endlabellist
	\centering
	\includegraphics{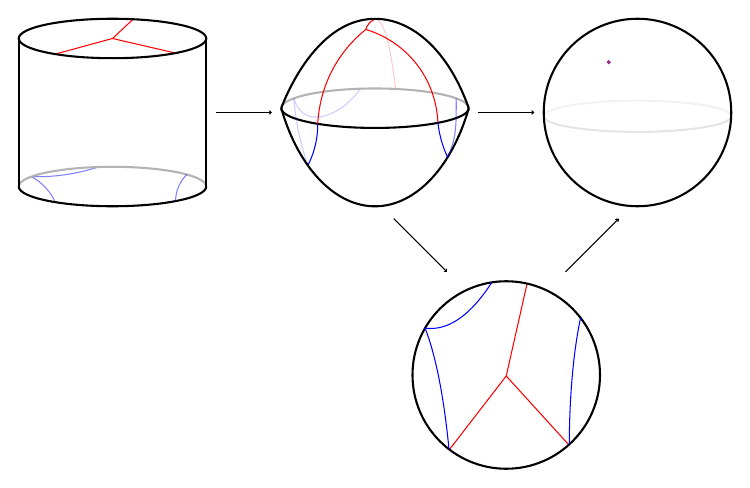}
	\caption{The universal sphere.} \label{figure:FlowIdealBoundary}
\end{figure}

\subsection{Master sprigs}
We can think of the extended endpoint maps as maps 
\[ \he^\pm: \hP^\pm \to S^2_\infty \]
supported on the two hemispheres of the universal sphere. These agree on the equator, so they combine to form a $\pi_1(M)$-equivariant map
\[ m: S^2_u \to S^2_\infty. \]
Alternatively, the identifications $\hP^\pm \simeq \hP$ define a \emph{flattening map}
\[ f: S^2_u \to \hP, \]
and we can define $m$ by
\[ m(p) =
\begin{cases}
\he^+ \circ f(p), & \text{for $p \in \hP^+$}\\
\he^- \circ f(p), & \text{for $p \in \hP^-$}.
\end{cases}
\]

For each $z \in S^2_\infty$, the set $m^{-1}(z) \subset S^2_u$ consists of all positive sprigs rooted at $z$ lying in the upper hemisphere, together with all negative sprigs rooted at $z$ lying in the lower hemisphere. We call this the \emph{master sprig rooted at $z$}. Note that the flattening map $f$ takes each master sprig to the master set rooted at the same point. See Figure~\ref{figure:FlowIdealBoundary}.

Although the master \emph{sets} do not form a decomposition of $\hP$, the master \emph{sprigs} form a decomposition of $S^2_u$, the \emph{master decomposition}
\[ \cM := \{ m^{-1}(z) \mid z \in S^2_\infty \}. \]
This is upper semicontinuous by Lemma~\ref{lemma:PreimageUSC}, and monotone by the following lemma.

\begin{lemma}
	Each master sprig is connected.
\end{lemma}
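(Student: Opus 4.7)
My plan is to argue by contradiction, assuming a separation $Z = A \sqcup B$ into disjoint nonempty closed subsets of $\hP$. Since each extended leaf in $Z$ (positive or negative) is connected, each lies entirely in $A$ or in $B$, and the same goes for each trivial extended leaf (point of $\flu{z}$). In particular, the totally disconnected set $\flu{z}$ splits as $(A\cap \flu{z})\sqcup (B\cap \flu{z})$.

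The first step is to exclude degenerate configurations. I would argue that each of $A$ and $B$ meets $\flu{z}$: an $A$ disjoint from $S^1_u$ would be a compact subset of $P$ containing no trivial leaves, so every extended leaf it contains would be nontrivial with ends in $\flu{z}\subset S^1_u$, contradicting $A\cap S^1_u=\emptyset$. Then, after relabeling, I may assume that $A$ contains a nontrivial positive extended leaf $\hat K$ with $\he^+(\hat K)=z$ (the nontrivial-negative case is symmetric; the case where both $A$ and $B$ consist of isolated boundary points is handled separately by noting that $\he$ is sphere-filling and using how positive/negative extended leaves through points of $\flu{z}$ meet). Since $B$ is disjoint from $\hat K$, pick a complementary component $U$ of $\hat K$ that meets $B$, with $\partial_u U=(k,k')\subset A$.

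Next I would approximate $\hat K$ from inside $U$. Fix a point $b\in B\cap \overline U$. Iterating Lemma~\ref{lemma:NearbyLeaf} along an exhaustion of $U$ by compact sets gives a sequence of positive leaves $L_n\subset U$ whose closures $\overline{L_n}$ Hausdorff-converge to $\hat K$ and each separate $\hat K$ from $b$. Writing $\hat L_n:=\hcD^+(L_n)$ for the containing extended leaves, upper semicontinuity (Lemma~\ref{lemma:LimitsOfExtendedLeaves}) gives $\varlimsup \hat L_n\subset \hat K$, and continuity of $\he^+$ forces $\he^+(\hat L_n)\to z$; Lemma~\ref{lemma:ConvergenceToFaces} then forces $\varlimsup \partial_u \hat L_n \subset \{k,k'\}$. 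Meanwhile, since each $\overline{L_n}$ separates $\hat K$ from $b$, the leaves $\hat L_n$ themselves must meet any arc from $\hat K$ to $b$.

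The main obstacle — and the crux of the argument — is converting this approximation into a contradiction, since the $\hat L_n$ are generically not in $Z$: their endpoints approach $z$ but need not equal $z$. To close the argument, I would examine the negative extended leaves $\hcD^-(x_n)$ through an appropriately chosen sequence of endpoints $x_n\in \partial_u \hat L_n$ converging to $k\in \flu{z}\subset A$. Any $x_n$ with $\he(x_n)=z$ lies in $Z$, and applying Lemma~\ref{lemma:LimitsOfExtendedLeaves} to the decomposition $\hcD^-$ would extract a negative extended leaf in $\hcD^-(k)\subset Z$ whose position — trapped between the $\hat L_n$ and $\hat K$ in the planar topology of $\overline U$ — forces it to meet both the $A$-side (via $k$) and the $B$-side (through the separation provided by $\overline{L_n}$ and the location of $b$). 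Handling the delicate interplay between the two endpoint maps near the interval $(k,k')$ and the partition of $\flu{z}$ between $A$ and $B$ is what makes this bridging step nontrivial, and is where the proof will need its most careful planar-topology argument.
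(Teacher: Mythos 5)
Your proposal is not a complete proof: the decisive step is missing. The reduction to a separation $Z=A\sqcup B$ with a nontrivial extended leaf $\hat K\subset A$, and the approximation of $\hat K$ from inside a complementary component $U$ by leaves $\hat L_n$ via Lemma~\ref{lemma:NearbyLeaf}, are reasonable (though the closures $\overline{L_n}$ need only satisfy $\varlimsup \overline{L_n}\subset \hat K$, not Hausdorff-converge to all of $\hat K$). But the contradiction is never derived. The approximating leaves are not in $Z$: $\he^+(\hat L_n)\to z$ but in general $\he^+(\hat L_n)\neq z$, so they cannot connect $A$ to $B$, and your fallback object, the negative extended leaf $\hcD^-(k)$ through the end $k$, does lie in $Z$ (hence in $A$) but nothing in the sketch forces it to reach the $B$-side: it may perfectly well be a \emph{trivial} extended leaf, i.e.\ the single point $\{k\}\subset S^1_u$, in which case it is not ``trapped'' between anything. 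The sentence announcing that the bridging step ``will need its most careful planar-topology argument'' is precisely where a proof would have to be, and none is given; as it stands you have leaves accumulating on $\hat K\subset A$ and a point $b\in B$, but no connected subset of $Z$ meeting both $A$ and $B$, nor any other contradiction.

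For comparison, the paper avoids a separation argument altogether and proves connectedness directly from the geometry at infinity: take nested closed balls $D_1\supset D_2\supset\cdots$ around $z$ in $\overline{\bH^3}$ with $\bigcap_i D_i=\{z\}$, set $C_i=\pi_P(D_i\cap\bH^3)$, and show $Z=\bigcap_i \overline{C_i}$, using the fact that every flowline stays within a uniformly bounded distance of its associated geodesic (so flowlines with both endpoints away from $z$ eventually miss $D_i$, while flowlines with an endpoint at $z$ meet every $D_i$). Each $\overline{C_i}$ is compact and connected, so the nested intersection $Z$ is connected. If you want to pursue your route you must supply the missing planar-topology argument, including the case where the relevant leaves of $Z$ through points of $\flu{z}$ are trivial; the direct argument is substantially simpler.
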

\begin{proof}
	Suppose that some master sprig $Z = m^{-1}(z)$ is disconnected. Then we can write it as a disjoint union
	\[ Z = A \sqcup B \]
	of nontrivial compact sets.	Since $f$ restricts to a homeomorphism from the equator to $S^1_u$, and takes only points in the equator to $S^1_u$, it follows that $f(A) \cap S^1_u$ is disjoint from $f(B) \cap S^1_u$. Furthermore, $f(A) \cap P$ is disjoint from $f(B) \cap P$, since $p \in f(A) \cap f(B) \cap P$ would imply that $e^+(p) = e^-(p)$, which contradicts the fact that quasigeodesics have distinct endpoints. Thus we can write the corresponding master set as a disjoint union of compact sets
	\[ f(Z) = f(A) \sqcup f(B), \]
	which contradicts Lemma~\ref{lemma:MasterSetsConnected}.
\end{proof}

\subsection{Recovering the sphere at infinity}
The map $m: S^2_u \to S^2_\infty$ takes each master sprig to a point, so it factors through a $\pi_1(M)$-equivariant map
\[ \hatm: \rquotient{S^2_u}{\cM} \to S^2_\infty \]
defined on the decomposition space of $\cM$.

\begin{lemma}
	$\hatm$ is a homeomorphism.	
\end{lemma}
\begin{proof}
	Since $\cM$ is upper semicontinuous, the decomposition space $\rquotient{S^2_u}{\cM}$ is compact Hausdorff by Lemma~\ref{lemma:USCCH}. Since $\cM$ is monotone, and $m$ is surjective, the map $\hatm$ is bijective, and a continuous bijection between compact Hausdorff spaces is a homeomorphism.
\end{proof}

Thus we recover the sphere at infinity as a quotient of the universal sphere. As a consequence, we have the following observation.

\begin{lemma}\label{lemma:MasterSprigsNonseparating}
	Each master sprig is nonseparating in $S^2_u$.
\end{lemma}
\begin{proof}
	Suppose that some master sprig $Z = m^{-1}(z)$ separates $S^2_u$. Choose some complementary component $U$ of $Z$, and let $V$ be the union of all other complementary components. Then $U$ and $V$ are open unions of master sprigs, so they map to disjoint open sets $\hatm(U), \hatm(V) \subset S^2_\infty$, and $S^2_\infty = \hatm(U) \sqcup z \sqcup \hatm(V)$. But this means that $z$ is a cutpoint of $S^2_\infty$, which is impossible.
\end{proof}

\subsection{Spidery pairs}\label{subsection:SpideryPair}
Let $\hK \in \hcD^+$ and $\hL \in \hcD^-$ be positive and negative sprigs. If $\hK$ and $\hL$ share more than one end, then we can think of them as forming a kind of ``ideal bigon'' in $\hP$. The following lemma says that this cannot happen. One also show that there are no ``ideal polygons.'' That is, one cannot have an alternating sequence of positive and negative sprigs $\hK_0, \hK_1, \hK_2, \cdots, \hK_{n-1}$ where each $\hK_i$ shares an end with $\hK_{i+1}$ (mod $n$). We leave this as an exercise since it will not be used directly.

\begin{lemma}[No bigons]\label{lemma:NoBigons}
	Let $\hK \in \hcD^+$ and $\hL \in \hcD^-$ be positive and negative sprigs. Then $\partial \hK$ intersects $\partial \hL$ in at most one point.
\end{lemma}
\begin{proof}
	We will use following fact from classical analysis situs \cite[Theorem~II.5.28a]{Wilder}: If $A$ and $B$ are compact connected subsets of $S^2$, and $A \cap B$ is disconnected, then $A \cup B$ separates $S^2$.
	
	Think of $\hK$ as a subset of $\hP^+$ and $\hL$ as a subset of $\hP^-$. If they intersect in $S^1_u$ then they are contained in a single master sprig. If they intersect at more than one point in $S^1_u$, then this master sprig is separating, contradicting the preceding lemma. 
\end{proof}

It follows that the sprig decompositions satisfy the following definition.

\begin{definition}\label{definition:SpideryPair}
	A \emph{spidery pair} consists of two spidery decompositions $\hcD^\pm$ of the closed disc $\mathbf{D} = D \sqcup S^1$ with the following property: for each $\hK \in \hcD^+$ and $\hL \in \hcD^-$, the intersection $\hK \cap \hL$ is either empty, a compact subset of $D$, or a single point in $S^1$.
\end{definition}

\subsection{Fixed sprigs}
As with sprigs and master sets, a master sprig is said to be \emph{trivial} if it is contained entirely in $S^1_u$, thought of as the equator of $S^2_u$. Equivalently, a master sprig $m^{-1}(z)$ is trivial if and only if the corresponding master set $f(m^{-1}(z))$ is trivial. We will use the following lemma to simplify the process of finding closed orbits.

\begin{lemma}\label{lemma:ElementPreservingLeaf}
	Let $g$ be a nontrivial element of $\pi_1(M)$ that fixes a leaf, nontrivial sprig, nontrivial master set, or nontrivial master sprig. Then $g$ fixes a point in $P$.
\end{lemma}
\begin{proof}
	We will use the Brouwer Plane Translation Theorem (cf. \cite{Franks}), which says that an orientation-preserving homeomorphism of the plane with a bounded forward orbit must have a fixed point.
	
	If $g$ fixes a leaf, nontrivial sprig, or nontrivial master sprig, then it fixes the corresponding master set, which is nontrivial. Thus we can assume that $g$ fixes a nontrivial master set $Z$.
	
	In the sphere at infinity $S^2_\infty$, $g$ has exactly two fixed points $\alpha$ and $\rho$ in an attracting-repelling pair, so $Z$ must be rooted at one of these. After possibly replacing $g$ by its inverse, we can assume that $Z$ is rooted at the repelling fixed point $\rho$.
	
	Since $Z$ is nontrivial, we can choose a point $p \in Z \cap P$. If $p$ is contained in a positive subleaf of $Z$, then $e^+(p) = \rho$ and $e^-(p) \neq \rho$, so we have $\lim_{i \to \infty} e^+(g^i(p)) = \rho$ and $\lim_{i \to \infty} e^-(g^i(p)) = \alpha$. Similarly, if $p$ is contained in a negative subleaf, then $\lim_{i \to \infty} e^-(g^i(p)) = \rho$ and $\lim_{i \to \infty} e^+(g^i(p)) = \alpha$.
	
	Either way, this means that $\lim_{i \to \infty} e^+(g^i(p)) \neq \lim_{i \to \infty} e^-(g^i(p))$. Since $\he^+$ agrees with $\he^-$ on $S^1_u$, this means that $g^i(p)$ stays in a bounded subset of $P$ for all $i \geq 0$. Then $g$ fixes some point in $P$ by the Brouwer Plane Translation theorem, as does $g^{-1}$.
\end{proof}

\section{Decompositions I: Separation}\label{section:Separation}
In this section we study the structure of the individual sprig decompositions, and relate the separation properties of sprigs with those of their ends. In particular, we will see that any two positive or negative sprigs are separated from each other by an interval's worth of positive or negative sprigs. The relationship between the positive and negative sprig decompositions is covered in the following section.

Throughout this section, take $\hcD$ to be either $\hcD^+$ or $\hcD^-$, and ``sprig'' to mean a positive or negative sprig accordingly. In fact, the results in this section apply to any spidery decomposition of the closed disc (Definition~\ref{definition:SpideryDecomposition}).

\begin{lemma}\label{lemma:TwoEndedSprigs}
	Some sprig $\hK \in \hcD$ has at least two ends.
\end{lemma}
\begin{proof}
	Suppose that each sprig has exactly one end. Then we can define a map
	\[ r: \hP \to S^1_u \]
	that sends each point $p \in \hP$ to the end $r(p)$ of its sprig. This is continuous by upper semicontinuity, and it restricts to the identity on $S^1_u$. That is, it is a retraction of $\hP$ onto $S^1_u$, which is impossible.
\end{proof}

\subsection{Complementary regions and intervals}
Fix an orientation on $S^1_u$. Then any ordered pair of points $s, t \in S^1_u$ determines an oriented open subinterval $(s, t) \subset S^1_u$ running from $s$ to $t$. This makes sense even when $s = t$, where we take $(s, s) = S^1_u \setminus \{s\}$.

Given a closed subset $A \subset \hP$, the connected components of $\hP \setminus A$ are called \emph{complementary regions of $A$}, while the connected components of $S^1_u \setminus \partial A$ are called \emph{complementary intervals of $\partial A$} (or of $A$). These are open intervals whose initial and terminal points lie in $\partial A$.

\begin{lemma}\label{lemma:SprigComplementaryRegions}
	Let $U$ be a complementary region of a sprig $\hK \in \hcD$. Then $U \cap S^1_u$ is a complementary interval of $\partial \hK$.
\end{lemma}
\begin{proof}
	Choose an arbitrary point $p \in U$. Then the sprig $\hcD(p)$ is contained in $U$, so $U \cap S^1_u$ is nontrivial since it contains $\partial \hcD(p)$. Each complementary interval of $\partial \hK$ that intersects $U \cap S^1_u$ is contained in it, so $U \cap S^1_u$ is a nontrivial union of complementary intervals.
	
	Suppose that $U \cap S^1_u$ contains two distinct complementary intervals $I$ and $J$. Since $U$ is path-connected (it is a connected open subspace of a locally path-connected space), we can find an arc $c \subset U$ with initial point in $I$ and terminal point in $J$. This separates the endpoints of $I$, which are contained in $\hK$, so it separates $\hK$. But $\hK$ is connected, so $U \cap S^1_u$ must be a single complementary interval.
\end{proof}

Since each complementary interval is contained in a complementary region, it follows that $U \mapsto U \cap S^1_u$ defines a bijection between the complementary regions of a sprig and the complementary intervals of its ends. In particular, an $n$-ended sprig $\hK$ has exactly $n$ complementary regions. 

\begin{corollary}\label{corollary:SprigsUnlinked}
	Let $\hK, \hL \in \hcD$ be distinct. Then $\partial \hK$ is contained in a single complementary interval of $\partial \hL$.
\end{corollary}

Note that this relies on the fact that sprigs are closed. For example, \cite{Dawson} contains an illustration of two disjoint, connected, non-closed subsets of the square that connect opposite pairs of corners.

The following easy corollary says that the separation properties of sprigs can be seen in terms of their ends.

\begin{corollary}\label{corollary:SeparatingSprigsViaEnds}
	Let $\hK, \hK', \hK'' \in \hcD$ be distinct. Then $\hK'$ separates $\hK$ from $\hK''$ if and only if $\partial \hK'$ separates $\partial \hK$ from $\partial \hK''$.
\end{corollary}

\subsection{Saturated continua}
The ($\hcD$)-saturation of a connected set is connected, and the saturation of a closed set is closed by upper semicontinuity, cf. \S\ref{subsection:Decompositions}. A closed, connected, saturated subset of is called a \emph{saturated continuum}.

The preceding results generalize easily as follows.

\begin{lemma}\label{lemma:SaturatedComplementaryRegions}
	Let $U$ be a complementary region of a saturated continuum $\hA \subset \hP$. Then $U \cap S^1_u$ is a complementary interval of $\partial \hA$.
\end{lemma}

\begin{corollary}\label{corollary:SaturatedsUnlinked}
	Let $\hA, \hB \subset \hP$ be disjoint saturated continua. Then $\partial \hA$ is contained in a single complementary interval of $\partial \hB$.
\end{corollary}

\begin{corollary}\label{corollary:SeparatingSaturatedsViaEnds}
	Let $\hA, \hA', \hA'' \subset \hP$ be disjoint saturated continua. Then $\hA'$ separates $\hA$ from $\hA''$ if and only if $\partial \hA'$ separates $\partial \hA$ from $\partial \hA''$.
\end{corollary}

Let $\hA \subset \hP$ be a saturated continuum. For each complementary region $U$ of $\hA$, we define the corresponding \emph{face} of $\hA$ by
\[ F(U) := \overline{U} \cap \hA. \]

\begin{lemma}\label{lemma:FaceInSprig}
	Let $U$ be a complementary region of a saturated continuum $\hA \subset \hP$. Then $F(U)$ is a connected subset of a single sprig.
\end{lemma}
\begin{proof}
	The face $F(U)$ is connected because the disc has the \emph{Brouwer property} \cite[\S~II.4]{Wilder}: If $X \subset \hP$ is closed and connected, and $U$ is a component of $\hP \setminus X$, then the frontier of $U$ is closed and connected.
	
	Let $a, b \in \partial \hA$ be the endpoints of the corresponding complementary interval $U \cap S^1_u = (a, b)$, and note that $a, b \in F(U)$. Let $\hK_a = \hcD(a)$ and $\hK_b = \hcD(b)$ be the sprigs through these points. We will show that $F(U) \subset \hK_a \cup \hK_b$. Since $F(U)$ is connected, this implies that $\hK_a = \hK_b$, which completes the lemma.
	
	Fix a point $p \in F(U)$, and choose a sequence of points $p_1, p_2, \cdots \in U$ that converge to $p$. The ends of each $\hcD(p_i)$ are contained in $(a, b)$, so they must accumulate on either $a$ or $b$ because $\varlimsup \hcD(p_i) \subset \hcD(p) \subset A$. Thus we have either $\hcD(p) = \hK_a$ or $\hcD(p) = \hK_b$. This applies for every point in $F(U)$, so $F(U) \subset \hK_a \cup \hK_b$ as desired.
\end{proof}

\subsection{Separating sprigs}
Given a pair of distinct sprigs $\hK, \hL \in \hcD$, let $U(\hK | \hL)$ be the complementary region of $\hK$ that contains $\hL$, and let $U(\hL | \hK)$ be the complementary region of $\hL$ that contains $\hK$. The intersection
\[ U(\hK, \hL) := U(\hK | \hL) \cap U(\hK | \hL) \]
is called the \emph{region between $\hK$ and $\hL$}.

\begin{lemma}\label{lemma:RegionBetweenSprigs}
	Let $\hK, \hL \in \hcD$ be distinct. Then $U(\hK, \hL)$ is path-connected. Moreover,
	\[ U(\hK, \hL) \cap S^1_u = I(\hK, \hL) \sqcup I(\hL, \hK), \]
	where $I(\hK, \hL)$ ($I(\hL, \hK)$) is the unique complementary interval of $\partial \hK \cup \partial \hL$ with initial endpoint in $\partial \hK$ (resp. $\partial \hL$) and terminal endpoint in $\partial \hL$ (resp. $\partial \hK$).
\end{lemma}
\begin{proof}
	Let $\hA = \hP \setminus U(\hK | \hL)$ and $\hB = \hP \setminus U(\hL | \hK)$, which are disjoint, nonseparating saturated continua. Then $\hA \cup \hB$ is nonseparating because the disc has the \emph{Phragmen-Brouwer property} \cite[\S~II.4]{Wilder}: If $\hA, \hB \subset \hP$ are disjoint and nonseparating, then $\hA \cup \hB$ is nonseparating. Then $U(\hK, \hL)$ is connected, and hence path-connected, since it can be written as $U(\hK, \hL) = \hP \setminus (\hA \cup \hB)$. 
	
	Note that $U(\hK | \hL) \cap S^1_u = (k, k')$ for $k, k' \in \partial \hK$, while $U(\hL | \hK) \cap S^1_u = (l', l)$ for $l', l \in \partial \hL$. See the left side of Figure~\ref{figure:SeparatingSprigs}. Thus $U(\hK, \hL) \cap S^1_u = U(\hK | \hL) \cap U(\hL | \hK) \cap S^1_u = (k, l) \sqcup (l', k')$. By Corollary~\ref{corollary:SprigsUnlinked}, $\partial \hK$ and $\partial \hL$ do not separate each other, so $(k, l)$ is the unique complementary interval of $\partial \hK \cup \partial \hL$ with initial point in $\partial \hK$ and terminal point in $\partial \hL$, and similarly for $(l', k')$.
\end{proof}

\begin{figure}[h]
	\labellist
	\tiny\hair 2pt
	
	\pinlabel $\textcolor{red}{\hK}$ [l] at 28 90
	\pinlabel $\textcolor{red}{k}$ [tr] at 21 48
	\pinlabel $\textcolor{red}{k'}$ [br] at 21 132
	
	\pinlabel $\textcolor{red}{\hL}$ [r] at 165 90
	\pinlabel $\textcolor{red}{l}$ [tl] at 159 48
	\pinlabel $\textcolor{red}{l'}$ [bl] at 159 132	
	
	\pinlabel ${U(\hK, \hL) = U(\hL, \hK)}$ [c] at 90 90
	
	\pinlabel ${I(\hK, \hL)}$ [t] at 87 7
	
	\pinlabel ${I(\hL, \hK)}$ [b] at 87 171
	
	\pinlabel $\textcolor{gray}{c}$ [l] at 295 140
	\pinlabel $\textcolor{red}{\mathbf{C}}$ [c] at 275 120
	\pinlabel $\textcolor{red}{\mathbf{F(U)}}$ [r] at 262 100
	\pinlabel $\textcolor{white}{U}$ [c] at 225 80
	
	\endlabellist
	\centering
	\includegraphics{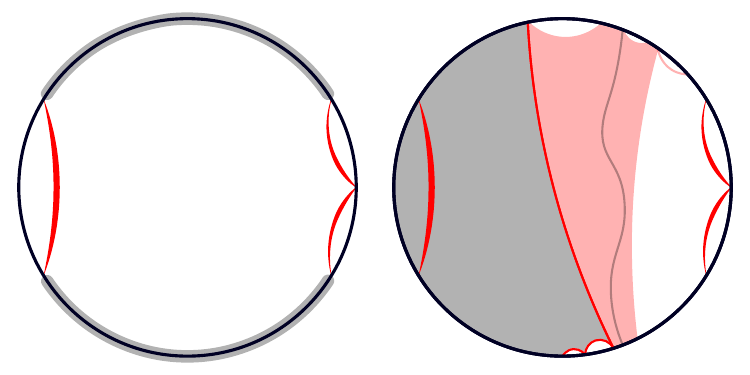}
	\caption{Separating sprigs.} \label{figure:SeparatingSprigs}
\end{figure}

It follows that a sprig $\hK' \in \hcD$ is contained in $U(\hK, \hL)$ if and only if $\partial \hK' \subset I(\hK, \hL) \sqcup I(\hL, \hK)$. In addition, the preceding lemma, together with Corollary~\ref{corollary:SeparatingSprigsViaEnds}, implies the following.

\begin{lemma}\label{lemma:SeparatingViaS1Intervals}
	Let $\hK, \hL \in \hcD$ be distinct. Then $\hK' \in \hcD$ separates $\hK$ from $\hL$ if and only if $\partial \hK'$ intersects both $I(\hK, \hL)$ and $I(\hL, \hK)$.
\end{lemma}

\begin{proposition}\label{proposition:SprigSeparatingSprigs}
	Let $\hK, \hL \in \hcD$ be distinct. Then some $\hK' \in \hcD$ separates $\hK$ from $\hL$.
\end{proposition}
\begin{proof}
	See the right side of Figure~\ref{figure:SeparatingSprigs}. Since $U(\hK, \hL)$ is path-connected, we can find an arc $c \subset U(\hK, \hL)$ with initial point in $I(\hK, \hL)$ and terminal point in $I(\hL, \hK)$. Let $\hC = \hcD(c)$ be the saturation of $c$, which is contained in $U(\hK, \hL)$, and let $U$ be the complementary region of $\hC$ that contains $\hK$. Then the face $F(U)$ separates $\hK$ from $\hL$, since it has points in both $I(\hK, \hL)$ and $I(\hL, \hK)$, and $F(U)$ is contained in a sprig $\hK'$ by Lemma~\ref{lemma:FaceInSprig}.
\end{proof}

Note that this provides an alternative proof of Lemma~\ref{lemma:TwoEndedSprigs}: simply fix two sprigs and take any sprig that separates them.

\subsection{Separation intervals}
Let us study the collection of all sprigs that separate two fixed sprigs.

\begin{definition}
	Let $(\hK_\alpha, \hK_\beta)$ be an ordered pair of distinct sprigs. The corresponding \emph{separation interval} is the set
	\[ \sigma(\hK_\alpha, \hK_\beta) := \{ \hK \in \hcD \mid \hK \text{ separates } \hK_\alpha \text{ from } \hK_\beta \}, \]
	together with the binary relation $\prec$ defined by setting $\hK \prec \hK'$ whenever $\hK$ separates $\hK_\alpha$ from $\hK'$.
\end{definition}

We will show that this defines a linear order, and that the separation interval between any pair of distinct sprigs is order-isomorphic to the real line.

Fix an ordered pair of distinct sprigs $(\hK_\alpha, \hK_\beta)$. For brevity, we write $\sigma_{\alpha\beta} := \sigma(\hK_\alpha, \hK_\beta)$ for the corresponding separation interval, and $U_{\alpha\beta} := U(\hK_\alpha, \hK_\beta)$ for the region between $\hK_\alpha$ and $\hK_\beta$. By Lemma~\ref{lemma:RegionBetweenSprigs}, this is connected, and it intersects $S^1_u$ in the disjoint union of two intervals. We abbreviate and label these intervals by
\begin{align*}	I_{\alpha\beta} &:= I(\hK_\alpha, \hK_\beta) = (k_\alpha, k_\beta) \\
				I_{\beta\alpha} &:= I(\hK_\beta, \hK_\alpha) = (l_\beta, l_\alpha)	\end{align*}
where $k_\alpha, l_\alpha \in \partial \hK_\alpha$ and $l_\beta, k_\beta \in \partial \hK_\beta$. See the left side of Figure~\ref{figure:SeparationInterval}. 

\begin{figure}[h]
	\labellist
	\tiny\hair 2pt
	
	\pinlabel $\textcolor{red}{\hK_\alpha}$ [l] at 28 89
	\pinlabel $\textcolor{red}{\hK_\beta}$ [r] at 164 89
	
	\pinlabel $k_\alpha$ [tr] at 20 50
	\pinlabel $k_\beta$ [tl] at 159 50
	\pinlabel $I_{\alpha\beta}$ [t] at 89 7
	
	\pinlabel $l_\beta$ [bl] at 159 128
	\pinlabel $l_\alpha$ [br] at 20 131
	\pinlabel $I_{\beta\alpha}$ [b] at 89 172
	
	\pinlabel $U_{\alpha\beta}$ [c] at 85 89
	
	\pinlabel $\textcolor{red}{\hK}$ [l] at 232 89
	\pinlabel $\textcolor{red}{I_{\beta\alpha}(\hK)}$ [b] at 232 168	
	\pinlabel ${\textcolor{red}{I_{\alpha\beta}(\hK)}}$ [tr] at 225 20
	
	\pinlabel $\textcolor{red}{\hK'}$ [r] at 300 89
	\pinlabel $\textcolor{red}{I_{\beta\alpha}(\hK')}$ [bl] at 302 165	
	\pinlabel ${\textcolor{red}{I_{\alpha\beta}(\hK')}}$ [tl] at 297 13

	\endlabellist
	\centering
	\includegraphics{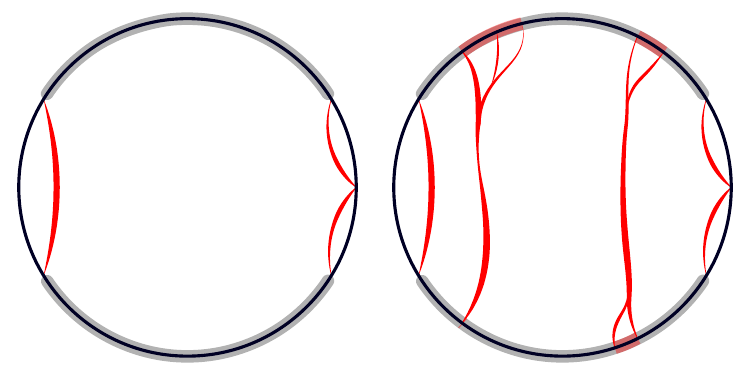}
	\caption{The separation interval.} \label{figure:SeparationInterval}
\end{figure}

For each $\hK \in \hcD$, define 
\begin{align*}
\partial_{\alpha\beta} \hK &:= \hK \cap I_{\alpha\beta}\\
\partial_{\beta\alpha} \hK &:= \hK \cap I_{\beta\alpha}.
\end{align*}
By Lemma~\ref{lemma:SeparatingViaS1Intervals}, $\hK \in \sigma_{\alpha\beta}$ if and only if $\partial_{\alpha\beta} \hK \neq \emptyset \neq \partial_{\beta\alpha} \hK$.

\begin{definition}
	Given subsets $X, Y$ of linearly ordered set $(Z, <)$, we take $X < Y$ to mean that $x < y$ for all $x \in X$ and $y \in Y$. We say that $X$ and $Y$ are \emph{comparable} if either $X < Y$ or $Y < X$.
\end{definition}
	
The intervals $I_{\alpha\beta}$ and $I_{\beta\alpha}$ come with linear orders induced by their orientations. For distinct $\hK, \hK' \in \sigma_{\alpha\beta}$, we would like to say that $\partial_{\alpha\beta} \hK$ and $\partial_{\alpha\beta} \hK'$ are comparable in $I_{\alpha\beta}$, and that $\partial_{\beta\alpha} \hK$ and $\partial_{\beta\alpha} \hK'$ are comparable in $I_{\beta\alpha}$.

For each $\hK \in \sigma_{\alpha\beta}$, let $I_{\alpha\beta}(\hK)$ be the minimal sub-interval of $I_{\alpha\beta}$ that contains $\partial_{\alpha\beta} \hK$, and let $I_{\beta\alpha}(\hK)$ be the minimal sub-interval of $I_{\beta\alpha}$ that contains $\partial_{\beta\alpha} \hK$. Then $\partial_{\alpha\beta} \hK$ and $\partial_{\alpha\beta} \hK'$ are comparable if and only if $I_{\alpha\beta}(\hK)$ and $I_{\alpha\beta}(\hK')$ are disjoint, and similarly for $\partial_{\beta\alpha} \hK$ and $\partial_{\beta\alpha} \hK'$. See the right side of Figure~\ref{figure:SeparationInterval}. 

\begin{lemma}
	If $\hK, \hK' \in \sigma_{\alpha\beta}$ are distinct, then 
	\[ I_{\alpha\beta}(\hK) \cap I_{\alpha\beta}(\hK') = \emptyset = I_{\beta\alpha}(\hK) \cap I_{\beta\alpha}(\hK'). \]
\end{lemma}
\begin{proof}
	If $I_{\alpha\beta}(\hK)$ intersects $I_{\alpha\beta}(\hK')$, then after possibly switching $\hK$ with $\hK'$, there must be some end of $\hK$ in the interior of $I_{\alpha\beta}(\hK')$. But the ends of $\hK$ are contained in a single complementary interval of $\hK'$ (Corollary~\ref{corollary:SprigsUnlinked}), so this means that \emph{all} of the ends of $\hK$ are contained in the interior of $I_{\alpha\beta}(\hK')$. This is impossible, since $\hK$ must also have an end in $I_{\beta\alpha}$. Thus we must have $I_{\alpha\beta}(\hK) \cap I_{\alpha\beta}(\hK') = \emptyset$, and a similar argument shows that $I_{\beta\alpha}(\hK) \cap I_{\beta\alpha}(\hK') = \emptyset$.
\end{proof}

Consequently, the linear orders on $I_{\alpha\beta}$ and $I_{\beta\alpha}$ induce linear orders on
\begin{align*}
							&\cI_{\alpha\beta} := \{ I_{\alpha\beta}(\hK) \mid \hK \in \sigma \} \\
\text{and}\hspace{.1in} 	&\cI_{\beta\alpha} := \{ I_{\beta\alpha}(\hK) \mid \hK \in \sigma \}.
\end{align*}
By the following lemma, we can think of $\hK \mapsto I_{\alpha\beta}(\hK)$ as an order-isomorphism $\sigma_{\alpha\beta} \to \cI_{\alpha\beta}$, and $\hK \mapsto I_{\beta\alpha}(\hK)$ as an anti-order-isomorphism $\sigma_{\alpha\beta} \to \cI_{\beta\alpha}$. 
 
\begin{lemma}\label{lemma:SeparationIntervalEquivalencies}
	The following are equivalent for all $\hK, \hK' \in \sigma$:
	\begin{enumerate}
		\item $I_{\alpha\beta}(\hK) < I_{\alpha\beta}(\hK')$,
		\item $I_{\beta\alpha}(\hK) > I_{\beta\alpha}(\hK')$,
		\item $\hK$ separates $\hK_\alpha$ from $\hK'$, and
		\item $\hK'$ separates $\hK$ from $\hK_\beta$.
	\end{enumerate}
\end{lemma}
\begin{proof}	
	The equivalence $(1) \Leftrightarrow (2)$ follows from Corollary~\ref{corollary:SprigsUnlinked}, while the equivalences $(1) + (2) \Leftrightarrow (3)$ and $(1) + (2) \Leftrightarrow (4)$ follow  from Corollary~\ref{corollary:SeparatingSprigsViaEnds}. 
\end{proof}

In particular:

\begin{corollary}
	Let $\hK_\alpha, \hK_\beta \in \hcD$ be distinct. Then the separation interval $\sigma(\hK_\alpha, \hK_\beta)$ is a linearly ordered set.
\end{corollary}

Let
\begin{align*}
					&\bigcup \cI_{\alpha\beta} := \bigcup_{\hK \in \sigma_{\alpha\beta}} I_{\alpha\beta}(\hK) \\
\text{and}\hspace{.1in} 	&\bigcup \cI_{\beta\alpha} := \bigcup_{\hK \in \sigma_{\alpha\beta}} I_{\beta\alpha}(\hK).
\end{align*}
We will use the following lemma to show that $\sigma_{\alpha\beta}$ is order-isomorphic to the real line. 

\begin{lemma}\label{lemma:IntervalsCover}
	$\bigcup \cI_{\alpha\beta} = I_{\alpha\beta}$ and $\bigcup \cI_{\beta\alpha} = I_{\beta\alpha}$.
\end{lemma}
\begin{proof}
	Let $\hA$ be the union of $\hK_\alpha$, $\hK_\beta$, and all sprigs in $\hcD$ that intersect $I_{\beta\alpha}$. This is a saturated continuum, since it is the saturation of the closed interval $\overline{I_{\beta\alpha}}$.
	
	Fix a point $s \in I_{\alpha\beta}$. If $s \in \hA$, then the sprig $\hcD(s)$ is contained in $\sigma_{\alpha\beta}$ since it has ends in both $I_{\alpha\beta}$ and $I_{\beta\alpha}$, and hence $s \in \bigcup \cI_{\alpha\beta}$.
	
	If $s \notin \hA$, then it is contained in $U \cap S^1_u$ for some complementary region $U$ of $\hA$. By Lemma~\ref{lemma:FaceInSprig}, the face $F(U)$ is contained in some sprig $\hK \subset \hA$. It follows that $\hK \in \sigma_{\alpha\beta}$, and $s \in I_{\alpha\beta}(\hK)$, so we have once again that $s \in \bigcup \cI_{\alpha\beta}$. Thus $\bigcup \cI_{\alpha\beta} = I_{\alpha\beta}$, and a similar argument shows that $\bigcup \cI_{\beta\alpha} = I_{\beta\alpha}$.
\end{proof}

\begin{proposition}\label{proposition:SeparationIntervalR}
	Let $\hK_\alpha, \hK_\beta \in \hcD$ be distinct. Then the separation interval $\sigma(\hK_\alpha, \hK_\beta)$ is order-isomorphic to the real line.
\end{proposition}
\begin{proof}
	The preceding lemma implies that $\cI_{\alpha\beta}$ is a separable and complete linearly ordered set with no maximum and minimum, which characterizes the linear order on $\bR$. Since $\sigma(\hK_\alpha, \hK_\beta)$ is order-isomorphic to $\cI_{\alpha\beta}$ it is also order-isomorphic to $\bR$.
\end{proof}

The following properties of separation intervals follow immediately from Lemma~\ref{lemma:SeparationIntervalEquivalencies}.

\begin{lemma}\label{lemma:SeparationIntervalProperties}
	Let $\hK_\alpha, \hK_\beta \in \hcD$ be distinct. 
	\begin{enumerate}
		\item Then $\hK \mapsto \hK$ defines an anti-order-isomorphism $\sigma(\hK_\alpha, \hK_\beta) \to \sigma(\hK_\beta, \hK_\alpha)$.
		\item Let $\hK_a, \hK_b \in \sigma(\hK_\alpha, \hK_\beta)$, with $\hK_a \prec \hK_b$. Then 
		\[ \sigma(\hK_a, \hK_b) = \{ \hK \in \sigma(\hK_\alpha, \hK_\beta) \mid \hK_a \prec \hK \prec \hK_b\},\]
		and the inclusion $\sigma(\hK_a, \hK_b) \hookrightarrow \sigma(\hK_\alpha, \hK_\beta)$ is order-preserving.
	\end{enumerate}
\end{lemma}

\section{Decompositions II: Linking}\label{section:Linking}
Now that we have some tools to work with the individual sprig decompositions, we can study the relationship between them. In fact, the results in this section apply to any spidery pair in the disc (Definition~\ref{definition:SpideryPair}).

Let $A, B \subset S^1$ be closed, disjoint, nontrivial subsets of the circle. We define the \emph{linking number} $\lk(A,B)$ to be the number of complementary intervals of $A$ that intersect $B$, which is finite and symmetric by the following lemma. We say that $A$ and $B$ are \emph{unlinked} if $\lk(A, B) = 1$, and \emph{linked} of $\lk(A, B) \geq 2$. The terminology comes from the fact that $A$ and $B$ are linked if and only if there are pairs $\{a, a'\} \subset A$ and $\{b, b'\} \subset B$ that are linked as $0$-spheres in $S^1$.

\begin{lemma}\label{lemma:LinkingNumber}
	Let $A, B \subset S^1$ be closed, disjoint, and nontrivial. Then $\lk(A, B)$ is finite and equal to $\lk(B, A)$.
\end{lemma}
\begin{proof}
	Fix an orientation on $S^1$. An \emph{$AB$-interstitial interval} is an oriented interval $(a, b) \subset S^1$ that is disjoint from $A \cup B$, with $a \in A$ and $b \in B$. It's easy to see that each complementary interval of $A$ that intersects $B$ contains a unique $AB$-interstitial interval, and each $AB$-interstitial interval is contained in a unique complementary interval of $A$ that intersects $B$. Similarly, each complementary interval of $B$ that intersects $A$ contains a unique $AB$-interstitial interval, and each $AB$-interstitial interval is contained in a unique complementary interval of $B$ that intersects $A$. Thus there is a bijective correspondence between the complementary intervals of $B$ that intersect $A$, the $AB$-interstitial intervals, and the complementary intervals of $B$ that intersect $A$, and hence $\lk(A, B) = \lk(B, A)$.
	
	To complete the lemma, it suffices to show that there are only finitely many $AB$-interstitial intervals. Otherwise, we would have an infinite sequence of distinct $AB$-interstitial intervals $(a_i, b_i)_{i = 1}^\infty$. These are pairwise disjoint intervals in the circle, so their diameters must go to zero, and we can assume after taking a subsequence that they converge to a single point $s \in S^1$. Then since $A$ and $B$ are closed, we have $s = \lim a_i \in A$ and $s = \lim b_i \in B$, which contradicts the assumption that $A$ and $B$ are disjoint.
\end{proof}

The following lemma is an immediate consequence of the usual pigeonhole principle; we will use it repeatedly.

\begin{lemma}[Linking Pigeonhole Principle]
	Let $A, B \subset S^1$ be closed, disjoint, and nontrivial, and let $X \subset S^1$ be a subset with $A \subset X$. If $X$ has fewer than $\lk(A, B)$ connected components, then $X \cap B \neq \emptyset$.
\end{lemma}

In particular, if $A, B \subset S^1$ are linked, then any connected set that contains $A$ must intersect $B$.

\subsection{The linked region}\label{subsection:LinkedRegion}
Let $\hK \in \hcD^+$ and $\hL \in \hcD^-$ be positive and negative sprigs. If $\partial \hK \cap \partial \hL = \emptyset$, then we define their linking number by 
\[ \lk(\hK, \hL) := \lk(\partial \hK, \partial \hL). \]
We say that $\hK$ and $\hL$ are \emph{unlinked} when $\lk(\hK, \hL) = 1$ and \emph{linked} when $\lk(\hK, \hL) \geq 2$. If $\partial \hK \cap \partial \hL \neq \emptyset$, then their linking number is undefined, and they are neither linked nor unlinked. One could also define linking numbers for pairs of sprigs of the same kind, but these are always unlinked by Corollary~\ref{corollary:SprigsUnlinked}.

The sprigs through a point $p \in P$ always have disjoint ends (Corollary~\ref{corollary:DisjointEnds}), so
\[ \lk(p) := \lk(\hcD^+(p), \hcD^-(p)) \]
is always defined, and we call $p$ a \emph{linked point} or \emph{unlinked point} accordingly. The set of all linked points is called the \emph{linked region} and denoted by $P_l \subset P$. We will show that $P_l$ is closed in $P$ and nontrivial.

\subsection{Linked sequences}
The following lemma will be useful when considering sequences of linked points.

\begin{lemma}\label{lemma:LinkedSequencesComplementaryRegions}
	Let $(p_i)_{i = 1}^\infty$ be a sequence of points in $P_l$ that converge to a point $p \in P$. Then $\{p_i\}$ intersects at most finitely many complementary regions of $\hcD^+(p)$ and of $\hcD^-(p)$.
\end{lemma}
\begin{proof}
	Let $\hK_i = \hcD^+(p_i)$ and $\hL_i = \hcD^-(p_i)$ be the positive and negative sprigs through each $p_i$, and let $\hK = \hcD^+(p)$ and $\hL = \hcD^-(p)$ be the positive and negative sprigs through $p$. We have $\varlimsup \partial \hK_i \subset \partial \hK$ and $\varlimsup \partial \hL_i \subset \partial \hL$ by upper semicontinuity. Since $p \in P$, Corollary~\ref{corollary:DisjointEnds} implies that $\partial \hK$ is disjoint from $\partial \hL$, so $\varlimsup \partial \hK_i$ is disjoint from $\varlimsup \partial \hL_i$.

	Suppose that $\{p_i\}$ intersects infinitely many complementary regions of $\hK$. Then after taking a subsequence we can assume that each $p_i$ is contained in a distinct complementary region $U_i$. Then $\hK_i \subset U_i$ and $\partial \hK_i \subset U_i \cap S^1_u =: J_i$ for each $i$.
		
	For each $i$, let $k_i$ be an end of $\hK_i$, which is contained in $J_i$. By the Linking Pigeonhole Principle, we can also find an end $l_i$ of $\hL_i$ that lies in $J_i$. The $J_i$ are pairwise disjoint intervals in the circle, so their diameters must go to zero, and we can assume after taking a subsequence that they converge to a single point $s \in S^1_u$. Then $\lim k_i = s = \lim l_i$, which contradicts the fact that $\varlimsup \partial \hK_i$ is disjoint from $\varlimsup \partial \hL_i$. Thus the $p_i$ can visit only finitely many complementary regions of $\hK$, and the same argument shows that they can visit only finitely many complementary regions of $\hL$.
\end{proof}
	
\begin{lemma}\label{lemma:LinkedSequencesN}
	Let $(p_i)_{i = 1}^\infty$ be a sequence of points in $P_l$ that converge to a point $p \in P$, and suppose that there is an integer $n$ such that $\lk(p_i) \geq n$ for all $i$. Then $\lk(p) \geq n$.
\end{lemma}
\begin{proof}
	We will continue to use the notation and observations in the first paragraph of the preceding proof.
	
	For each $i$, and choose ends $k_i^0, k_i^1, \cdots, k_i^n \in \partial \hK_i$ and $l_i^0, l_i^1, \cdots, l_i^n \in \partial \hL_i$ such that $k_i^0, l_i^0, k_i^1, l_i^1, \cdots, k_i^n, l_i^n$ is positively ordered in $S^1_u$. After taking a subsequence, we can assume that $\lim_{i \to \infty} k_i^j$ converges to an end $k^j \in \partial \hK$ for each $j$ and $\lim_{i \to \infty} l_i^j$ converges to an end $l^j \in \partial \hL$ for each $j$.
	
	It suffices to show that $k^0, l^0, k^1, l^1, \cdots, k^n, l^n$ are pairwise distinct and positively ordered in $S^1_u$. Since $\partial \hK$ is disjoint from $\partial \hL$, it follows that $k^j \neq l^{j'}$ for all $j, j'$. Suppose that $k^j = k^{j'}$ for some $j \neq j'$. Then either $\lim_{i \to \infty} (k_i^j, k_i^{j'}) = k^j$ or $\lim_{i \to \infty} (k_i^{j'}, k_i^j) = k^j$. Either case is impossible, since each of these intervals contain at least one of the $l^{j''}$'s, which cannot accumulate on $k^j \in \partial \hK$. Therefore, $k^j \neq k^{j'}$ for all $j \neq j'$ and a similar argument shows that $l^j \neq l^{j'}$ for all $j \neq j'$.
	
	The fact that $k_i^0, l_i^0, k_i^1, l_i^1, \cdots, k_i^n, l_i^n$ is positively ordered for each $i$ easily implies that $k^0, l^0, k^1, l^1, \cdots, k^n, l^n$ is positively ordered.
\end{proof}

\begin{proposition}\label{proposition:PlClosed}
	$P_l$ is closed in $P$.
\end{proposition}
\begin{proof}
	Let $(p_i)$ be a sequence of points in $P_l$ that converge to $p \in P$. Then $\lk(p_i) \geq 2$ for all $i$. By the preceding lemma, $\lk(p) \geq 2$ which means that $p \in P_l$.
\end{proof}

The following lemma will be used in the proof of the Homotopy Closing Lemma.

\begin{lemma}\label{lemma:LinkedSequences3}
	Let $(p_i)_{i = 1}^\infty$ be a sequence of points in $P$ that converge to a point $p \in P$, and suppose that $\lk(p_i) \geq 3$ for all $i$. Then $p_i \in \hcD^+(p) \cap \hcD^-(p)$ for all sufficiently large $i$.
\end{lemma}
\begin{proof}
	With the notation above, suppose that there are infinitely many $i$ for which $p_i \notin \hK$. By Lemma~\ref{lemma:LinkedSequencesComplementaryRegions} we can assume after passing to a subsequence that each $\hK_i$ is contained in the same complementary region $U$ of $\hK$.
	
	Then the ends of each $\hK_i$ are contained in the corresponding complementary interval, which can be written in the form $U \cap S^1_u = (k, k')$ for $k, k' \in \partial \hK$. Thus $\limsup \partial \hK_i \subset \{k, k'\}$, so we can find sequences of points $s_i, s'_i \in (k, k')$ such that $\partial \hK_i (k, s_i) \sqcup (s'_i, k')$ for each $i$, and $\lim s_i = k$ and $\lim s'_i = k'$. Since $\hK_i$ and $\hL_i$ have linking number at least $3$, the Linking Pigeonhole Principle implies that some end $l_i \in \partial \hL_i$ is contained in either $(k, s_i)$ or $(s'_i, k'_i)$. Then $\varlimsup l_i$ contains either $k$ or $k'$, contradicting the fact that $\varlimsup \partial \hK_i$ is disjoint from $\varlimsup \partial \hL_i$. Thus $p_i \in \hK$ for all but finitely many $i$, and a similar argument shows that $p_i \in \hL$ for all but finitely many $i$.
\end{proof}

\subsection{Nontriviality}
\begin{proposition}\label{proposition:PlNontrivial}
	$P_l \neq \emptyset$.
\end{proposition}
\begin{proof}
	By Lemma~\ref{lemma:TwoEndedSprigs}, we can find a positive sprig $\hK$ with two distinct ends $k_\alpha$ and $k_\beta$. By Lemma~\ref{lemma:NoBigons}, the negative sprigs $\hN_\alpha, \hN_\beta$ through these points are distinct, so $\sigma(\hN_\alpha, \hN_\beta)$ is nontrivial. Each $\hN \in \sigma(\hN_\alpha, \hN_\beta)$ separates $k_\alpha$ from $k_\beta$, but $\hN$ need not be linked with $\hK$ since we may have $\partial \hN \cap \partial \hK \neq \emptyset$. However, we can use the following lemma to find an $\hN \in \sigma(\hN_\alpha, \hN_\beta)$ that intersects $\chK$. Then $\partial \hN \cap \partial \hK = \emptyset$ (Corollary~\ref{corollary:DisjointEnds}), so $\hN$ is linked with $\hK$.
\end{proof}

\begin{lemma}\label{lemma:DenseInSeparationInterval}
	Let $k_\alpha$ and $k_\beta$ be distinct ends of a positive sprig $\hK$, and let $\hN_\alpha = \hcD^+(k_\alpha)$ and $\hN_\beta = \hcD^+(k_\beta)$. Then the separation interval $\sigma(\hN_\alpha, \hN_\beta)$ contains a dense set of sprigs that intersect $\chK$.
\end{lemma}
\begin{proof}
	See Figure~\ref{figure:DenseInSeparationInterval}. Given a pair of sprigs $\hN_a \prec \hN_b \in \sigma(\hN_\alpha, \hN_\beta)$, we must find a sprig $\hN \in \sigma(\hN_a, \hN_b)$ that intersects $\chK$ nontrivially. To see this, we will show that there are sprigs $\hN'_a \prec \hN'_b \in \sigma(\hN_a, \hN_b)$ such that $I(\hN'_a, \hN'_b)$ is disjoint from $\partial \hK$. By the same argument, we can find sprigs $\hN''_a \prec \hN''_b \in \sigma(\hN'_a, \hN'_b)$ such that $I(\hN''_b, \hN''_a)$ is disjoint from $\partial \hK$;  note that $I(\hN''_a, \hN''_b) \subset I(\hN'_a, \hN'_b)$ is already disjoint from $\partial \hK$. Then it suffices to take any $\hN \in \sigma(\hN''_a, \hN''_b)$: such a sprig must intersect $\hK$ because it separates $k_\alpha$ from $k_\beta$, and the intersection is in $\chK$ as opposed to $\partial \hK$ because the ends of $\hN$ are contained in $I(\hN''_a, \hN''_b) \cup I(\hN''_b, \hN''_a)$.
	
	\begin{figure}[h]
		\labellist
		\small\hair 2pt
		
		\pinlabel $\textcolor{red}{k_\alpha}$ [r] at 10 90
		\pinlabel $\textcolor{blue}{\hN_a}$ [bl] at 29 62
		\pinlabel $\textcolor{blue}{\hN'_a}$ [l] at 66 75
		\pinlabel $\textcolor{blue}{\hN'_b}$ [r] at 108 115
		\pinlabel $\textcolor{blue}{\hN_b}$ [r] at 143 128
		\pinlabel $\textcolor{red}{k_\beta}$ [l] at 170 89
		
		\pinlabel ${I(\hN_\alpha, \hN_\beta)}$ [tr] at 38 22
		\pinlabel $\textcolor{blue}{I(\hN'_a, \hN'_b)}$ [t] at 91 6
		
		\pinlabel ${I(\hN_\beta, \hN_\alpha)}$ [bl] at 139 158
		\pinlabel $\textcolor{blue}{I(\hN'_b, \hN'_a)}$ [b] at 84 174

		\pinlabel $\textcolor{blue}{\hN''_a}$ [l] at 255 75
		\pinlabel $\textcolor{blue}{\hN''_b}$ [r] at 280 115
		
		\pinlabel $\textcolor{blue}{I(\hN''_a, \hN''_b)}$ [t] at 268 6
		\pinlabel $\textcolor{blue}{I(\hN''_b, \hN''_a)}$ [b] at 265 174
		
		\endlabellist
		\centering
		\includegraphics{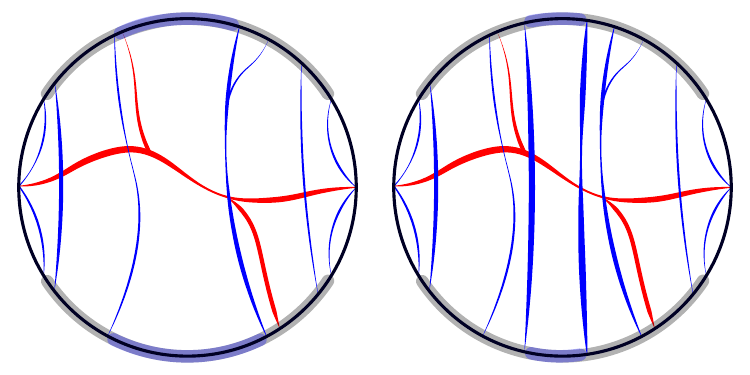}
		\caption{A dense set of sprigs intersecting the bounded part of $\hK$.} \label{figure:DenseInSeparationInterval}
	\end{figure}
	
	Fix $\hN_a \prec \hN_b \in \sigma(\hN_\alpha, \hN_\beta)$, and choose a point $x \in I_{ab} \setminus \partial \hK$, where we are using the abbreviation $I_{ab} := I(\hN_a, \hN_b)$. By Lemma~\ref{lemma:IntervalsCover}, $x \in I_{ab}(\hN')$ for some sprig $\hN' \in \sigma(\hN_a, \hN_b)$. Note that the initial and terminal points of $\partial_{ab} \hN'$, which we will denote by $\partial_{ab}^\mp \hN'$, cannot both be contained in $\partial \hK$. Indeed, if $I_{ab}(\hN')$ is a point, then $\partial_{ab}^- \hN' = \partial_{ab}^+ \hN' = x$, which was chosen outside of $\partial \hK$; if it is an interval, then $\partial_{ab}^- \hN'$ and $\partial_{ab}^+ \hN'$ are distinct ends of $\hN'$, which can share at most one end with $\hK$.
	
	If $\partial_{ab}^+ \hN' \notin \partial \hK$, then we take $\hN'_a$ to be $\hN'$, and $\hN'_b$ to be any sprig in $\sigma(\hN'_a, \hN_b)$ that is close to $\hN'_a$, in the sense that $\partial \hK$ does not separate $\partial_{ab}^+ \hN'_a$ from $\partial_{ab}^- \hN'_b \}$. Then no end of $\hK$ lies in $I(\hN'_a, \hN'_b)$ as desired. If $\partial_{ab}^+(\hN') \in \partial \hK$, then $\partial_{ab}^- \hN \notin \partial \hK$, and we take $\hN'_b$ to be $\hN'$, and $\hN'_a$ to be any sprig in $\sigma(\hN_a, \hN'_b)$ that is close to $\hN'_b$.
	
	As noted above, we can complete the proof by repeating this argument.
\end{proof}

Although the proposition does not need the full power of this lemma, we will need it for the Homotopy Closing Lemma.

\section{Coarse transverse hyperbolicity}\label{section:CoarseHyperbolicity}
In this section, we will see that the coarsely hyperbolic behavior of our flow is reflected in the action of an $\omega$- or $\alpha$-sequence on the flowspace.

\subsection{Straightening flowlines}
Given a flowline $x \cdot \bR$, let $(x \cdot \bR)^g$ denote the corresponding oriented geodesic, running from $E^-(x)$ to $E^+(x)$. The nearest-point projection restricts to a proper surjective map
\[ \rho_{x \cdot \bR}: x \cdot \bR \to (x \cdot \bR)^g \]
that moves each point a uniformly bounded distance, independent of $x$. To see this, recall that $x \cdot \bR$ is contained in the $C$-neighborhood of $(x \cdot \bR)^g$, for a uniform constant $C$, which we can picture as a ``banana'' foliated by radius-$C$ hyperbolic discs. The nearest-point projection takes each point in $x \cdot \bR$ to the center of the corresponding disc, so $d(y, \rho_{x \cdot \bR}(y)) < C$ for all $y \in x \cdot \bR  $.

Since the endpoints of flowlines vary continuously, we can define a continuous, $\pi_1(M)$-equivariant \emph{straightening map}
\begin{align*}
	G: \tM 	&\to 		\tM					\\
	x	&\mapsto	\rho_{x \cdot \bR}(x)
\end{align*}
that takes each flowline onto its corresponding geodesic, while moving each point by a distance of at most $C$.

\begin{lemma}
	There is a constant $D > 0$ such that:
	\begin{enumerate}
		\item If $x_0, x_1 \in \tM$ are contained in the same weak positive leaf then there are times $t_0, t_1 \in \bR$ such that $x_0 \cdot [t_0, \infty)$ and $x_1 \cdot [t_1, \infty)$ have Hausdorff distance at most $D$.
		
		\item If $y_0, y_1 \in \tM$ are contained in the same weak negative leaf then there are times $s_0, s_1 \in \bR$ such that $y_0 \cdot (-\infty, s_0]$ and $y_1 \cdot (-\infty, s_1]$ have Hausdorff distance at most $D$.
	\end{enumerate}
\end{lemma}
\begin{proof}
	Let $D = 2C + \epsilon$ for an arbitrary constant $\epsilon > 0$. We will prove (1), and (2) follows from a similar argument.
	
	Since $x_0$ and $x_1$ are contained in the same weak positive leaf, the geodesics $G(x_0 \cdot \bR)$ and $G(x_1 \cdot \bR)$ have the same positive endpoint $z = E^+(x_0) = E^+(x_1)$. Therefore, we can find a horosphere $S$ centered at $z$ such that the distance between $G(x_0 \cdot \bR) \cap S'$ and $G(x_1 \cdot \bR) \cap S'$ is less than $\epsilon$ for every horosphere $S'$ centered at $z$ that lies forward of $S$. Since $G$ moves each point by a distance of at most $C$, it suffices to take $t_i = \sup \{ t \in \bR \mid G(x_i \cdot t) \in S\}$.
\end{proof}

Let $\U \tM$ be the unit tangent bundle of $\tM$, thought of as the space of pairs $(\gamma, y)$ consisting of an oriented geodesic $\gamma \subset \tM$ together with a point $y \in \gamma$. This comes with a natural flow, the \emph{geodesic flow} $\tTheta$, which takes a vector $(\gamma, y)$, after time $\tau \in \bR$, to the vector $(\gamma, y')$ such that $y'$ lies at signed distance $\tau$ from $y$.

Our straightening map $G$ has a natural lift
\begin{align*}
	F: \tM &\to \U \tM \\
	x &\mapsto ((x \cdot \bR)^g, G(x)),
\end{align*}
which takes each orbit of $\tPhi$ onto an orbit of $\tTheta$. In particular, if we fix $x \in \tM$, then for each time $t \in \bR$, there is a time $\tau \in \bR$ such that $F(x \cdot t) = \tTheta_{\tau(t)}F(x)$. Since $G$ moves each point a uniformly bounded distance, $\tau \to \pm\infty$ as $t \to \pm\infty$.

Each horosphere $S \subset \tM$ has two natural lifts to the unit tangent bundle, a \emph{stable horosphere} $S^+$ consisting of inward-pointing normal vectors and an \emph{unstable horosphere} $S^-$ consisting of outward-pointing normal vectors. If $S$ is centered at $z \in S^2_\infty$, then
\begin{align*}
							&S^+ = \{ (\gamma, y) \mid E^+(\gamma) = z, y \in S \}, \\
	\text{and}\hspace{.1in}	&S^- = \{ (\gamma, y) \mid E^-(\gamma) = z, y \in S \}.
\end{align*}
Here, we are abusing notation and writing $E^\pm(\gamma)$ for the positive/negative endpoint of $\gamma$. We say that $S^+$ \emph{points towards} $z$, and $S^-$ \emph{points away from} $z$. The geodesic flow is an Anosov flow whose strong stable and unstable leaves are exactly the stable and unstable horospheres \cite{Anosov}.

For each $x \in \tM$, the point $F(x)$ is contained in a unique stable horosphere $S^+(x)$, which points towards $E^+(x)$. Since $F$ takes the flowline through $x$ surjectively onto the corresponding lifted geodesic, every stable horosphere that points towards $E^+(x)$ is of the form $S^+(x \cdot t)$ for some $t$. Similar observations hold for unstable horospheres.

\subsection{Coarse contraction for $\omega$-sequences}\label{subsection:CoarseContraction}
We will use the map $F$, together with the Anosov behavior of $\tTheta$, to understand the way that $\omega$-sequences act on positive sprigs.

For each $q \in \hP$, we define
\[ \widehat{\Omega}(q) := \{ q' \in \hP \mid \he^+(q') = \he^+(q) \text{ and } \he^-(q') = \he^-(q) \}. \]

\begin{lemma}
	If $q \in P$, then $\widehat{\Omega}(q)$ is a compact subset of $P$.
\end{lemma}
\begin{proof}
	Let $z = \he^+(q)$ and $w = \he^-(q)$, and note that $z \neq w$. Then
	\begin{align*}
		\widehat{\Omega}(q) &= (\he^+)^{-1}(z) \cap (\he^-)^{-1}(w) \\
							&= (e^+)^{-1}(z) \cap (e^-)^{-1}(w)
	\end{align*}
	where the latter equality comes from the fact that $\he^+$ agrees with $\he^-$ on $S^1_u$. Thus $\widehat{\Omega}(q) \subset P$, and compactness follows from Lemma~\ref{lemma:BoundedSubsetsOfP}.
\end{proof}

\begin{proposition}[Coarse contraction]\label{proposition:CoarseContraction}
	Let $(g_i)$ be an $\omega$-sequence for $q \in \omega(p)$. Then
	\[ \varlimsup_{i \to \infty} g_i(p') \subset \widehat{\Omega}(q) \]
	for every $p' \in P$ with $e^+(p') = e^+(p)$.
\end{proposition}
\begin{proof}
	It suffices to show that $e^\pm(g_i(p'))$ converges to $e^\pm(q)$. Equivalently, we will show that the geodesics $F(\flow{g_i(p)})$ converge to $F(\flow{q})$.
	
	By the definition of an $\omega$-sequence, we can find a sequence of points $x_0, x_1, \cdots \in \flow{p}$ such that $\lim x_i = e^+(p)$ and $\lim g_i(x_i) = y \in \flow{q}$. For each $i$, let $S^+_i := S^+(x_i)$ be the stable horosphere that contains $F(x_i)$. Since $e^+(p) = e^+(p')$, each $S^+_i$ also contains $F(x'_i)$ for some point $x'_i \in \flow{p'}$. See Figure~\ref{figure:CoarseContraction}.
	
	\begin{figure}[h]
		\vspace{.2in}
		\labellist
		\tiny\hair 2pt
		\pinlabel ${S^+_{i-2}}$ 	[r] at 21 69
		\pinlabel ${S^+_{i-1}}$ 	[r] at 21 108
		\pinlabel ${S^+_i}$		[r] at 21 150
		
		\pinlabel $\textcolor{navy}{F(x_{i-2})}$ [l] at 81 42
		\pinlabel $\textcolor{navy}{F(x_{i-1})}$ [l] at 81 78
		\pinlabel $\textcolor{navy}{F(x_{i})}$ [l] at 81 121
		
		\pinlabel $\textcolor{navy}{F(x'_{i-2})}$ [r] at 48 57
		\pinlabel $\textcolor{navy}{F(x'_{i-1})}$ [r] at 62 87
		\pinlabel $\textcolor{navy}{F(x'_{i})}$ [r] at 72 125
		
		\pinlabel $\textcolor{navy}{\flow{p}^g}$ [t] at 80 8
		\pinlabel $\textcolor{navy}{\flow{p'}^g}$ [t] at 8 8
		
		\pinlabel $g_i$ [b] at 160 135
		
		\pinlabel $\textcolor{dkgreen}{\flow{q}^g}$ [l] at 275 142
		\pinlabel $\textcolor{navy}{g_i(\flow{p'}^g)}$ [tr] at 254 8
		\pinlabel $\textcolor{navy}{g_i(\flow{p}^g)}$ [tl] at 281 8
		
		\pinlabel $\textcolor{dkgreen}{F(y)}$ [tr] at 277 57
		\pinlabel $\textcolor{navy}{g_i(F(x_i))}$ [l] at 284 55
		\pinlabel $\textcolor{navy}{g_i(F(x'_i))}$ [br] at 266 63
		
		\pinlabel $S^+(y)$ [bl] at 304 99
		\pinlabel ${g_i(S^+_i)}$ [bl] at 325 10
		\endlabellist
		\centering
		\includegraphics{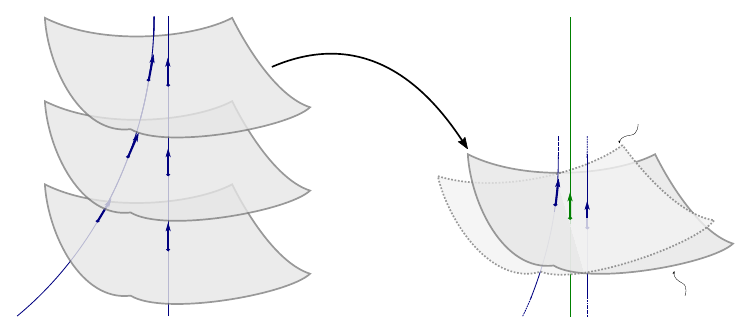}
		\caption{Coarse contraction.} \label{figure:CoarseContraction}
	\end{figure}
	
	For each $i$, $F(x_i)$ and $F(x'_i)$ are obtained by flowing $F(x_0)$ and $F(x'_0)$ for some time $\tau_i$ under the geodesic flow, where $\tau_i \to \infty$. The geodesic flow exponentially contracts stable horospheres, so the distance between $F(x_i)$ and $F(x'_i)$ goes to $0$ as $i \to \infty$. Each $g_i$ acts as an isometry on $\U \tM$, so we have $\lim F(g_i(x'_i)) = \lim F(g_i(x_i)) = F(y)$, and hence $\lim F(\flow{g_i(p')}) = F(\flow{q})$ as desired.
\end{proof}

For each $q \in \hP$, define
\[ \Omega(q) := \widehat{\Omega}(q) \cap \chcD^+(q), \]
which is a compact subset of $\chcD^+(q)$.

\begin{proposition}[Coarse contraction for sprigs]\label{proposition:CoarseContractionSprigs}
	Let $(g_i)$ be an $\omega$-sequence for $q \in \omega(p)$. Then
	\[ \varlimsup_{i \to \infty} g_i(p') \subset \Omega(q) \]
	for every $p' \in \chcD^+(p)$ 
\end{proposition}
\begin{proof}
	This follows from the preceding proposition, together with the fact that $\varlimsup \hcD^+(g_i(p)) \subset \hcD^+(q)$.
\end{proof}

This is the final ingredient in our proof of the Homotopy Closing Lemma. The corresponding result for $\alpha$-sequences is obtained by switching $+$ and $-$ in the proposition and in the definition of $\Omega(q)$.

\section{Closed orbits for quasigeodesic flows}\label{section:ClosedOrbits}
We turn to our main results.

\begin{recurrent_links_lemma}\label{lemma:RecurrentLinks}
	The linked region $P_l$ is closed, nontrivial, $\pi_1(M)$-invariant, and contains an $\omega$-recurrent point.
\end{recurrent_links_lemma}
\begin{proof}
	$P_l$ is closed by Proposition~\ref{proposition:PlClosed} and nontrivial by Proposition~\ref{proposition:PlNontrivial}. It is obviously $\pi_1(M)$-invariant, so it corresponds to a closed, flow-invariant subset $M_l = \pi(\flow{P_l}) \subset M$. Since $M$ is compact, $M_l$ is compact, so it must contain some minimal set. A minimal set is the closure of an almost-periodic orbit (see, e.g., \cite[Theorem~1.7]{Auslander}), which is \emph{a fortiori} $\omega$-recurrent.
\end{proof}

This, together with the Homotopy Closing Lemma, implies the Closed Orbits Theorem.

\subsection{Proof of the Homotopy Closing Lemma}
Fix an $\omega$-recurrent point $p \in P_l$, and a corresponding $\omega$-sequence $(g_i)_{i = 1}^\infty$. We must show that $g_i$ eventually fixes some point in $P$. By Lemma~\ref{lemma:ElementPreservingLeaf}, it suffices to show that $g_i$ eventually fixes some nontrivial sprig.

Let $\hK = \hcD^+(p)$ and $\hL = \hcD^-(p)$ be the positive and negative sprigs through $p$. By Corollary~\ref{corollary:DisjointEnds}, $\partial \hK \cap \partial \hL = \emptyset$. Since $\lim g_i(p) = p$, upper semicontinuity implies that $\varlimsup g_i(\hK) \subset \hK$ and $\varlimsup g_i(\hL) \subset \hL$.

If $\lk(p) \geq 3$, then Lemma~\ref{lemma:LinkedSequences3} implies that $g_i(\hK) = \hK$ for all $i$ sufficiently large. This completes the proof for the $(\geq 3)$-linked case, so we will assume from now on that $\lk(p) = 2$.

\subsubsection{A single complementary region}
After deleting any elements that fix $\hK$, we can assume that each $g_i$ takes $\hK$ into one of its complementary regions. Since $\lk(p) = 2$, there are exactly two complementary regions that contain ends of $\hL$.

\begin{claim}
	All but finitely many of the $g_i$ take $\hK$ into one of these two complementary regions.
\end{claim}

Let $W$ be a complementary region of $\hK$, and suppose that there is an infinite subsequence $(h_i) \subset (g_i)$ such that $h_i(\hK) \subset W$ for all $i$. Then $\partial h_i(\hK) \subset W \cap S^1_u$ for all $i$, so the Linking Pigeonhole Principle says that $W \cap S^1_u$ also contains ends $l_i \in \partial h_i(\hL)$. Let $l \in \overline{W \cap S^1_u}$ be an accumulation point of the $l_i$. This is an end of $\hL$, and $l \in W \cap S^1_u$ since the endpoints of this interval are ends of $\hK$. The $g_i(\hK)$ can visit only finitely many complementary regions (Lemma~\ref{lemma:LinkedSequencesComplementaryRegions}), so this proves the claim.

Consequently, it suffices to prove the Homotopy Closing Lemma with the additional assumption that $g_i(\hK) \subset W$ for all $i$, where $W$ is one of the two complementary regions that contain ends of $\hL$. The corresponding complementary interval is of the form $W \cap S^1_u = (k_\alpha, k_\beta)$ for ends $k_\alpha, k_\beta \in \partial \hK$. See Figure~\ref{figure:HomotopyClosing-U}.

\begin{figure}[h]
	\labellist
	\small\hair 2pt
	
	\pinlabel $p$ [tr] at 76 89
	\pinlabel $\textcolor{blue}{\hL}$ [b] at 42 90
	\pinlabel $\textcolor{red}{\hK}$ [r] at 80 125
	\pinlabel $\textcolor{white}{W}$  at 115 115
	\pinlabel $k_\alpha$ [t] at 90 8
	\pinlabel $k_\beta$ [b] at 90 172
	
	\endlabellist
	\centering
	\includegraphics{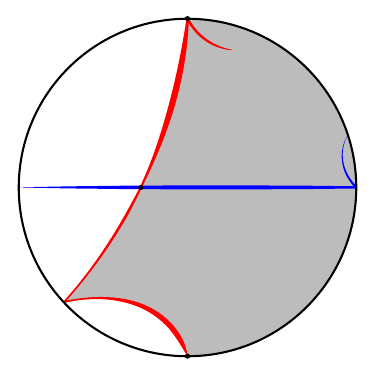}
	\caption{Homotopy closing for $2$-linked points.} \label{figure:HomotopyClosing-U}
\end{figure}

Let $\hN_\alpha$ and $\hN_\beta$ be the negative sprigs through $k_\alpha$ and $k_\beta$. These are distinct by Lemma~\ref{lemma:NoBigons}, so the corresponding separation interval $\sigma_{\alpha\beta} = \sigma(\hN_\alpha, \hN_\beta)$ is nontrivial and order-isomorphic to $\bR$ (Proposition~\ref{proposition:SeparationIntervalR}). We will show that $g_i$ eventually acts as a contraction on some sub-interval of $\sigma_{\alpha\beta}$ which means that it fixes some sprig in this interval.

\subsubsection{Translation- and rotation-like elements}
To begin, we will divide the elements of our $\omega$-sequence into two classes: We call $g_i$ \emph{translation-like} if $g_i(k_\alpha, k_\beta) \subset (k_\alpha, k_\beta)$, and \emph{rotation-like} if $g_i(k_\beta, k_\alpha) \subset (k_\alpha, k_\beta)$. See Figure~\ref{figure:HomotopyClosing-TranslationRotationLike}.

\begin{figure}[h]
	\labellist
	\small\hair 2pt
	
	\pinlabel $k_\alpha$ [t] at 90 8
	\pinlabel $k_\beta$ [b] at 90 172
	\pinlabel $g_i(k_\alpha)$ [tl] at 161 52
	\pinlabel $g_i(k_\beta)$ [bl] at 141 153
	
	\pinlabel $k_\alpha$ [t] at 268 8
	\pinlabel $k_\beta$ [b] at 268 172
	\pinlabel $g_i(k_\beta)$ [tl] at 340 51
	\pinlabel $g_i(k_\alpha)$ [bl] at 331 142

	\endlabellist
	\centering
	\includegraphics{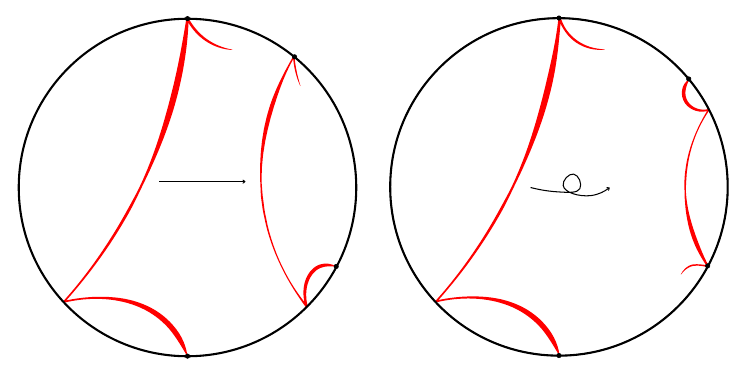}
	\caption{Translation- and rotation-like elements.} \label{figure:HomotopyClosing-TranslationRotationLike}
\end{figure}

\begin{claim}\label{claim:TranslationRotationLike}
	\begin{enumerate}
		\item Let $(h_i) \subset (g_i)$ be an infinite sequence of translation-like elements. Then
		\[ \lim h_i(k_\alpha) = k_\alpha \text{ and } \lim h_i(k_\beta) = k_\beta. \]
		\item Let $(h_i) \subset (g_i)$ be an infinite sequence of rotation-like elements. Then
		\[ \lim h_i(k_\alpha) = k_\beta \text{ and } \lim h_i(k_\beta) = k_\alpha. \]
	\end{enumerate}
\end{claim}	

We will prove (1); (2) follows from a similar argument.
	
The ends of $h_i(\hK)$ are contained in $(k_\alpha, k_\beta)$, and accumulate on ends of $\hK$, so it follows that both $h_i(k_\alpha)$ and $h_i(k_\beta)$ accumulate on nontrivial subsets of $\{k_\alpha, k_\beta\}$. Thus it suffices to show that $h_i(k_\alpha)$ does not accumulate on $k_\beta$, and $h_i(k_\beta)$ does not accumulate on $k_\alpha$.
	
Suppose that $h_i(k_\alpha)$ accumulates on $k_\beta$. Since the $h_i$ are translation-like, this implies that the intervals $h_i(k_\alpha, k_\beta)$ accumulate on $k_\beta$. But this is impossible, since each $h_i(k_\alpha, k_\beta)$ contains an end of $h_i(\hL)$, which cannot accumulate on $k_\beta \in \partial \hK$. A similar argument shows that $h_i(k_\beta)$ cannot accumulate on $k_\alpha$, completing the claim.

\subsubsection{Eventual return}
Let us show that each sprig in $\sigma_{\alpha\beta}$ eventually returns to $\sigma_{\alpha\beta}$.

\begin{claim}\label{claim:EventualReturn}
	For each $\hN \in \sigma_{\alpha\beta}$, we eventually have $g_i(\hN) \in \sigma_{\alpha\beta}$.
\end{claim}

First, we note that it suffices to prove this in the special case where $\hN$ intersects the bounded part $\chK$ of $\hK$. Indeed, in the general case, we can use Lemma~\ref{lemma:DenseInSeparationInterval} to find $\hN', \hN'' \in \sigma_{\alpha\beta}$ that intersect $\chK$, with $\hN' \prec \hN \prec \hN''$. Then $g_i(\hN'), g_i(\hN'') \in \sigma_{\alpha\beta} \Rightarrow g_i(\hN) \in \sigma_{\alpha\beta}$ by Lemma~\ref{lemma:SeparationIntervalProperties}(2).
	
Let $\hN$ be an element of $\sigma_{\alpha\beta}$ that intersects $\chK$ nontrivially, and suppose for contradiction that there is an infinite subsequence $(h_i) \subset (g_i)$ such that $h_i(\hN) \notin \sigma_{\alpha\beta}$ for all $i$. Choose a point $q \in \chN \cap \chK$. Then by Coarse Contraction (Proposition~\ref{proposition:CoarseContractionSprigs}), we can pass to a subsequence for which $h_i(q)$ converges to some point $q_\infty \in \chK$. Then $\varlimsup h_i(\hN) \subset \hcD^-(q_\infty)$, and $\hcD^+(q_\infty) = \hK$. Since $q_\infty \in P$, this means means that $\varlimsup \partial h_i(\hN)$ is disjoint from $\partial \hK$.

After taking a further subsequence, we can assume that the $h_i$ are all translation-like or all rotation-like. Suppose that the $h_i$ are all translation-like. Since $\hN \in \sigma_{\alpha\beta}$, it separates $k_\alpha$ from $k_\beta$, so it must have ends $n \in (k_\alpha, k_\beta)$ and $n' \in (k_\beta, k_\alpha)$. Since $h_i$ is translation-like, $h_i(n) \in (h_i(k_\alpha), h_i(k_\beta)) \subset (k_\alpha, k_\beta)$. We must also have $h_i(n') \in (k_\alpha, k_\beta)$, since otherwise $h_i(\hN)$ would separate $k_\alpha$ from $k_\beta$. In particular, $h_i(n')$ must be contained in either $(k_\alpha, h_i(k_\alpha))$ or $(h_i(k_\beta), k_\beta)$. Then $h_i(k_\alpha) \to k_\alpha$ and $h_i(k_\beta) \to k_\beta$ (Claim~\ref{claim:TranslationRotationLike}) implies that $h_i(n')$ accumulates on either $k_\alpha$ or $k_\beta$, which contradicts the fact that $\varlimsup \partial h_i(\hN)$ is disjoint from $\partial \hK$.
	
A similar argument handles the case where the $h_i$ are rotation-like, completing the claim.

In addition, each $g_i$ preserves or reverses the order on $\sigma_{\alpha\beta}$ depending on whether it is translation- or rotation-like:

\begin{claim}
	Let $\hN, \hN' \in \sigma_{\alpha\beta}$, with $\hN \prec \hN'$, and let $j$ large enough so that $g_j(\hN), g_j(\hN') \in \sigma_{\alpha\beta}$. Then $g_j(\hN) \prec g_j(\hN')$ when $g_j$ is translation-like, and $g_j(\hN') \prec g_j(\hN)$ when $g_j$ is rotation-like.
\end{claim}

This follows easily from Lemma~\ref{lemma:SeparationIntervalEquivalencies}, using the fact that $I(\hN_\alpha, \hN_\beta) \subset (k_\alpha, k_\beta)$ and $I(\hN_\beta, \hN_\alpha) \subset (k_\beta, k_\alpha)$.

\subsubsection{Eventual contraction}
To complete the proof, we will see that the elements of $\sigma_{\alpha\beta}$ not only return to $\sigma_{\alpha\beta}$, but are pulled closer together.

Let $\Omega(p) \subset \chK$ be as in \S\ref{subsection:CoarseContraction}, and let $\Omega := \Omega(p) \cap U(\hN_\alpha, \hN_\beta)$. For each $p' \in \chK$, Proposition~\ref{proposition:CoarseContractionSprigs} says that 
\[ \varlimsup g_i(p') \subset \Omega(p). \]
In addition, if the negative sprig through $p'$ is contained in $\sigma_{\alpha\beta}$, then $p'$ is eventually contained in $U(\hN_\alpha, \hN_\beta)$ by Claim~\ref{claim:EventualReturn}, so we have 
\[ \varlimsup g_i(p') \subset \Omega. \] 

Since $\Omega$ is a compact subset of the open set $U(\hN_\alpha, \hN_\beta)$, we can find negative sprigs $\hN_a \prec \hN_b$ in $\sigma_{\alpha\beta}$ such that $\Omega \subset U(\hN_a, \hN_b)$. By Lemma~\ref{lemma:DenseInSeparationInterval}, we can assume that $\hN_a$ and $\hN_b$ intersect $\chK$, and choose points $p_a \in \chN_a \cap \chK$ and $p_b \in \chN_b \cap \chK$. See Figure~\ref{figure:HomotopyClosing-Trapping}.

\begin{figure}[h]
	\labellist
	\small\hair 2pt
	
	\pinlabel $\textcolor{blue}{\hN_\beta}$ 				[t]		at 125 	145
	\pinlabel $\textcolor{blue}{\hN_\alpha}$ 				[b] 	at 120	28
	\pinlabel $\textcolor{violet}{\Omega(p)}$  						at 127	112
	\pinlabel $\textcolor{white}{U(\hN_\alpha, \hN_\beta)}$ 		at 40 	110
	
	\pinlabel $\textcolor{blue}{\hN_b}$ 					[t] 	at 300 	135
	\pinlabel $p_b$ 										[bl] 	at 267 	145
	\pinlabel $\textcolor{blue}{\hN_a}$ 					[b] 	at 300 	58
	\pinlabel $p_a$ 										[tl] 	at 228 	57
	\pinlabel $\textcolor{white}{U(\hN_a, \hN_b)}$ at 215 100
	\pinlabel $\textcolor{violet}{\Omega}$  at 272 102
	
	\endlabellist
	\centering
	\includegraphics{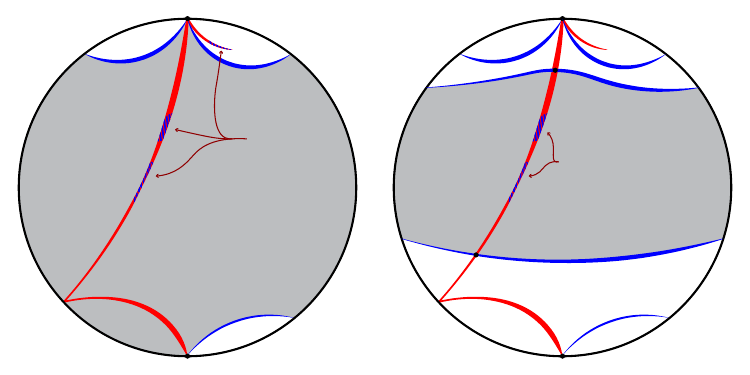}
	\caption{Eventual contraction.} \label{figure:HomotopyClosing-Trapping}
\end{figure}

By taking $i$ sufficiently large, we can assume $g_i(\hN_a)$ and $g_i(\hN_b)$ are contained in $\sigma_{\alpha\beta}$. Furthermore, since $\varlimsup g_i(p_a)$ and $\varlimsup g_i(p_b)$ are contained in $\Omega$, which is a compact subset of the open set $U(\hN_a, \hN_b)$, we can assume that $g_i(p_a)$ and $g_i(p_b)$ are contained in $U(\hN_a, \hN_b)$. Then $g_i(\hN_a)$ and $g_i(\hN_b)$ are contained in $\sigma(\hN_a, \hN_b) \subset \sigma_{\alpha\beta}$, which means that
\[ \hN_a \prec g_i(\hN_a) \prec g_i(\hN_b) \prec \hN_b \]
when $g_i$ is translation-like, and
\[ \hN_a \prec g_i(\hN_b) \prec g_i(\hN_a) \prec \hN_b \]
when it is rotation-like. That is, $g_i$ eventually acts as an orientation-preserving or -reversing contraction on $\sigma(\hN_a, \hN_a)$, and hence fixes some sprig in this interval. Such a sprig is nontrivial because it has at least two ends (Lemma~\ref{lemma:TrivialNontrivialSprig}). This completes the proof of the Homotopy Closing Lemma.

\subsection{Closed orbits and the universal circle}\label{subsection:ClosedOrbitsUC}
Finally, we will show that certain closed orbits can be seen purely in terms of the universal circle.

An orientation-preserving group action $\Gamma \acts S^1$ is \emph{pA-like} if each nontrivial element has a power that acts with an even number of fixed points, alternately attracting and repelling.

\begin{theorem}\label{theorem:pAlike}
	The action of $\pi_1(M) \acts S^1_u$ is pA-like.
\end{theorem}
\begin{proof}
	Fix a nontrivial element $g \in \pi_1(M)$, and let $\alpha, \rho \in S^2_\infty$ be the attracting and repelling fixed points for $g \acts S^2_\infty$. Let $A$ and $R$ be the master sets rooted at these points, and note that $\partial A = \he^{-1}(\alpha)$ and $\partial R = \he^{-1}(\rho)$ are disjoint, closed, totally disconnected, and $g$-invariant.
	
	Let $F \subset S^1_u$ be the fixed point set for $g \acts S^1_u$. Then after replacing $g$ by some power, we can assume that $F \neq \emptyset$. Indeed, since $\partial A$ and $\partial R$ are closed and disjoint, they have a finite linking number $\lk(\partial A, \partial R)$ by Lemma~\ref{lemma:LinkingNumber}. Choose a complementary interval $(a, r)$ of $\partial A \cup \partial R$ with $a \in \partial A$ and $r \in \partial R$. By the proof of this lemma, there are finitely many such intervals, which we called $\partial A \partial R$-interstitial intervals, so $g^n(a, r) = (a, r)$ for some $n$. Then $a, r \in F$ as desired.
	
	Now that $F \neq \emptyset$, note that $F \subset \partial A \cup \partial R$. Indeed, if $s \in S^1_u \setminus (\partial A \cup \partial R)$, then $\he(s) \notin \{\alpha, \rho\}$, so $g(\he(s)) \neq \he(s)$ and hence $g(s) \neq s$.
	
	Let $I$ be a complementary interval of $F$. Then $g$ acts as a translation on $I$, fixing its endpoints so we can write it as either $I = (a_I, r_I)$ or $I = (r_I, a_I)$, where $a_I$ and $r_I$ are attracting and repelling with respect to points in $I$. Then we must have $a_I \in \partial A$ and $r_I \in \partial R$. Indeed, take $s \in I \setminus (\partial A \cup \partial R)$. Then $a_I = \lim_{i \to \infty} g^i(s)$, which is contained in $\partial A$ because $\lim_{i \to \infty} g_i(\he^+(s)) = \alpha$. Similarly, $r_I = \lim_{i \to -\infty} g^i(s)$, which is contained in $\partial R$ because $\lim_{i \to - \infty} g_i(\he(s)) = \rho$.
	
	Let $a \in \partial A$. If $g(a) \neq a$, then $a$ is contained in some complementary interval $J$ of $F$, and $\lim_{i \to -\infty} g^i(a) = r_J$. But $\partial A$ is $g$-invariant, so this means that $r_J \in \partial A$, a contradiction. Hence $\partial A \subset F$. Similarly, $\partial R \subset F$, so $\partial A \cup \partial R \subset F$. We already showed the opposite inclusion, we we have $F = A \cup R$.
	
	Since each complementary interval of $F$ has its attracting and repelling endpoints in $\partial A$ and $\partial R$, it suffices to show that there are only finitely many such complementary intervals. But these are $\partial A \partial R$- or $\partial R \partial A$-interstitial interval, of which there are only finitely many because $\lk(\partial A, \partial R)$ is finite.
\end{proof}

The following result characterizes the elements of $\pi_1(M)$ that do not represent multiples of closed orbits. Conversely, any nontrivial element that does \emph{not} act as in the proposition represents a multiple of the free homotopy class of some closed orbit.

\begin{proposition}\label{proposition:AttractingRepelling}
	Suppose that $g \in \pi_1(M)$ does not fix a point in $P$. Then it acts on $S^1_u$ with exactly two fixed points in an attracting-repelling pair.
\end{proposition}
\begin{proof}
	Let $F \subset S^1_u$ be the fixed point set for $g \acts S^1_u$. Then $F \neq \emptyset$. Indeed, otherwise $g$ would fix a point in $P$ by Brouwer's fixed point theorem.
	
	In the previous proof, we showed that $F = \partial A \cup \partial R$, where $A$ and $R$ are the master sets rooted at $\alpha$ and $\rho$, and the points in $\partial A$ and $\partial R$ are attracting and repelling. To complete the lemma, it suffices to show that both $\partial A$ and $\partial R$ consists of a single point. If $\partial A$ contained more than one point, then $A$ would be nontrivial, and if $\partial R$ contained more than one point then $R$ would be nontrivial. Either way, $g$ would fix a point in $P$ by Lemma~\ref{lemma:ElementPreservingLeaf}, contradicting our hypothesis.
\end{proof}

\section{Questions}\label{section:Questions}
Our proof of the Homotopy Closing Lemma holds just as well for pseudo-Anosov flows, even ones that are not quasigeodesic. In fact, it can be applied to a larger class of \emph{coarsely hyperbolic} flows, defined by the existence of a transverse pair of spidery decompositions that are coarsely contracted and expanded.

It's easy to construct coarsely hyperbolic flows that are neither quasigeodesic nor pseudo-Anosov. For example, one can start with a pseudo-Anosov flow that is not quasigeodesic and blow up some orbit to a solid cylinder or torus. These examples are quite trivial since the Homotopy Closing Lemma follows easily from the Anosov Closing Lemma for the original flow. It would be interesting to construct a less trivial class of examples.

\subsection{Product-covered flows}
A flow on a closed $3$-manifold is \emph{product covered} if the lifted flow on the universal cover is conjugate to the vertical flow on $\bR^3$. By Theorem~\ref{theorem:MontgomeryZippin}, this is equivalent to having a planar flowspace.

\begin{question}
	Let $\Phi$ be a product-covered flow on a closed hyperbolic $3$-manifold $M$. Is $\Phi$ coarsely hyperbolic?
\end{question}

More generally, we propose the following.

\begin{conjecture}
	Every product-covered flow on a closed hyperbolic $3$-manifold contains a closed orbit.
\end{conjecture}

This would be implied by the following conjecture.

\begin{conjecture}
	The fundamental group of a closed hyperbolic $3$-manifold cannot act freely and cocompactly on the plane.
\end{conjecture}

Here, a cocompact action is one whose quotient space is compact, but not necessarily Hausdorff.

\subsection{M\"obius-like groups}
In \cite{Frankel_qgflows} we proposed a very different method for proving the Closed Orbits Theorem.

An action of a group $\Gamma$ on a circle $S^1$ is called {\em M\"obius-like} if each $g \in \Gamma$ is conjugate to a M\"obius transformation. It is called {\em hyperbolic M\"obius-like} if each $g \in \Gamma$ is conjugate to a hyperbolic M\"obius transformation. A M\"obius-like or hyperbolic M\"obius-like action is called {\em M\"obius} or {\em hyperbolic M\"obius}, respectively, if it is conjugate to an action by M\"obius transformations.

The fundamental group of a closed hyperbolic 3-manifold can never act as a hyperbolic M\"obius group (see \cite{Frankel_qgflows}). The only known examples of M\"obius-like actions that are not M\"obius are found in \cite{Kovacevic}. We propose the following conjecture.

\begin{conjecture}
	The fundamental group of a closed hyperbolic 3-manifold cannot act as a hyperbolic M\"obius-like group.
\end{conjecture}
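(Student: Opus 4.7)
The plan is to argue by contradiction and reduce to a known non-existence of certain representations. Suppose $\Gamma := \pi_1(M)$ acts on a circle $S^1$ as a hyperbolic M\"obius-like group. Then each non-trivial $g \in \Gamma$ has exactly two fixed points in $S^1$, an attracting point $\alpha(g)$ and a repelling point $\rho(g)$, and every other point is attracted to $\alpha(g)$ under forward iteration and to $\rho(g)$ under backward iteration. In particular the action is faithful, and every element exhibits the ``north-south dynamics'' characteristic of loxodromic transformations. The assignment $g \mapsto (\alpha(g), \rho(g))$ is $\Gamma$-equivariant, giving a rich equivariant structure on $S^1 \times S^1 \setminus \Delta$.

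The natural next step is to promote this pointwise north-south behavior to a global convergence action in the sense of Gehring-Martin, where every sequence of distinct group elements has a subsequence that converges to a constant map on the complement of a single point. If this succeeds, the Tukia-Casson-Jungreis-Gabai theorem gives a topological conjugacy from the given action to a Fuchsian action on $S^1$. Combined with the hypothesis that every element is loxodromic, this would produce a faithful representation $\Gamma \hookrightarrow PSL_2(\mathbb{R})$ with discrete image consisting entirely of hyperbolic elements, hence identifying $\Gamma$ with a closed hyperbolic surface group. But $\Gamma$ is the fundamental group of a closed hyperbolic $3$-manifold, and such groups are never surface groups (e.g.\ by comparing cohomological dimension, or by Mostow rigidity), yielding the desired contradiction.

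The hard part is the convergence step. Kovacevic's examples show that M\"obius-like actions need not be convergence actions, so the loxodromic condition on individual elements is insufficient; the proof must genuinely exploit the algebraic structure of $\Gamma$ as a closed hyperbolic $3$-manifold group. I would try to use word-hyperbolicity of $\Gamma$ together with its abundance of non-commuting pairs: for any two non-commuting loxodromics $g, h \in \Gamma$, analyze the cyclic order of the four fixed points $\alpha(g), \rho(g), \alpha(h), \rho(h)$ on $S^1$. A ping-pong argument applied to high powers $g^n, h^n$ should force these pairs to be linked and produce many new loxodromics whose attracting fixed points accumulate in a controlled way. Assembling these local convergence statements into a global one, uniformly over all sequences in $\Gamma$, is the technical heart of the argument.

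An alternative route, avoiding the Fuchsian classification entirely, would be to construct a $\Gamma$-equivariant Cannon-Thurston-type map from the Gromov boundary $\partial \Gamma \cong S^2$ to $S^1$, using the pairs $(\alpha(g),\rho(g))$ to determine fibers. Since no continuous equivariant surjection $S^2 \to S^1$ is compatible with a non-trivial hyperbolic-like dynamics of $\Gamma$ on both sides (the sphere action is minimal and has no invariant measure class supported on circles), this should give a contradiction more directly. Either way, the obstacle is the same: extracting global dynamical coherence from the purely pointwise loxodromic hypothesis, and it is exactly here that the geometry of closed hyperbolic $3$-manifolds rather than general group theory must enter.
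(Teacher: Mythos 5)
The statement you were asked about is not proved in the paper at all: it is stated as a \emph{conjecture} (the paper proposes it in Section~\ref{section:Questions} and only observes, via Lemma~\ref{lemma:AttractingRepelling}, that it would give an alternative route to the Closed Orbits Theorem, which the paper instead proves through the Homotopy Closing Lemma and the Recurrent Links Lemma). So there is no proof of the paper's to compare yours against, and your text should be judged on its own terms --- and on those terms it is not a proof but a program. The reduction you describe at the start (conjugate the action to a Fuchsian one, note that every element would be hyperbolic, and contradict the fact that a closed hyperbolic $3$-manifold group cannot be a discrete subgroup of $PSL_2(\bR)$, e.g.\ by cohomological dimension) is fine, and indeed the case where the action is genuinely conjugate to a M\"obius action is already handled in \cite{Frankel_qgflows}, as the paper notes. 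The entire content of the conjecture is the passage from ``every element individually has hyperbolic-like north--south dynamics'' to something global (a convergence action, or a conjugacy to a M\"obius action), and this is exactly the step you label ``the technical heart of the argument'' and do not carry out.

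Concretely: Kovacevic's examples \cite{Kovacevic}, which you cite yourself, show that elementwise M\"obius-like behavior does not imply a convergence action, so the Tukia--Casson--Jungreis--Gabai theorem cannot be invoked without new input; your suggestion to get it from word-hyperbolicity and ping-pong on linked fixed-point pairs is plausible as a strategy but is precisely where all known attempts stall --- nothing in your sketch explains how to obtain the uniform convergence property over \emph{all} sequences in $\Gamma$, rather than for powers of single elements or pairs. The alternative route via an equivariant map from $\partial\Gamma \cong S^2$ to $S^1$ has the same status: you assert that no such map is ``compatible'' with the dynamics, but you give no construction of the map and no proof of the incompatibility claim (minimality of the boundary action and absence of invariant measure classes do not by themselves rule out equivariant quotients of this kind without further argument). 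So the proposal contains a genuine gap coinciding with the open problem itself; it is a reasonable research outline, but it does not establish the statement, and the paper does not claim to either.
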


By Proposition~\ref{proposition:AttractingRepelling}, this would imply the Closed Orbits Theorem.


\begin{thebibliography}{99}
\bibitem{Anosov}
	D. V. Anosov,
	\emph{Ergodic Properties of Geodesic Flows on Closed Riemannian Manifolds of Negative Curvature},
	Dokl. Akad. Nauk SSSR 151, 1250--1252, 1963.
\bibitem{Auslander}
	J. Auslander,
	\emph{Minimal Flows and their Extensions},
	Mathematical Studies {\bf 153}, Notas de Matematica, North-Holland Pub. Co., Amsterdam, 1988.
\bibitem{BridsonHaefliger}
	M. R. Bridson and A. Haefliger,
	\emph{Metric Spaces of Non-Positive Curvature},
	Grundlehren der mathematischen Wissenschafen, vol. 319, Springer-Verlag, Berlin, Germany, 1999.
\bibitem{Calegari}
	D. Calegari,
	\emph{Universal circles for quasigeodesic flows},
	Geom. Topol. {\bf 10} (2006), 2271--2298. 
\bibitem{CalegariDunfield}
	D. Calegari and N. Dunfield,
	\emph{Laminations and groups of homeomorphisms of the circle},
	Invent. Math. {\bf 152} (2003), 149--204.
\bibitem{CannonThurston}
	J.W. Cannon and W.P. Thurston,
	\emph{Group invariant Peano curves},
	Geom. Topol. {\bf 11} (2007), 1315--355.
\bibitem{Dawson}
	R. J. M. Dawson,
	\emph{Paradoxical Connections},
	The American Mathematical Monthly {\bf 96} (1989), no. 1, 31--33.
\bibitem{Fenley}
	S. Fenley
	\emph{Ideal boundaries of pseudo-Anosov flows and uniform convergence groups, with connections and applications to large scale geometry},
	Geom. Topol. {\bf 16} (2012), 1--10.
\bibitem{FenleyMosher}
	S. Fenley and L. Mosher,
	\emph{Quasigeodesic flows in hyperbolic 3-manifolds},
	Topol. {\bf 40} (2001), no. 3, 503--537.
\bibitem{Frankel_qgflows}
	S. Frankel,
	\emph{Quasigeodesic flows and M\"obius-like groups},
	J. Diff. Geom. {\bf 93} (2013), no. 3, 401-429.
\bibitem{Frankel_spherefilling}
	S. Frankel,
	\emph{Quasigeodesic flows and sphere-filling curves},
	Geom. Topol. {\bf 19} (2015), no. 3, 1249--1262. 
\bibitem{Franks}
	J. Franks,
	\emph{A new proof of the Brouwer plane translation theorem},
	Ergod. Th. \& Dynam. Sys. {\bf 12} (1992), 217--226.
\bibitem{Gabai}
	D. Gabai,
	\emph{Foliations and the topology of 3-manifolds},
	J. Diff. Geom. {\bf 18} (1983), 445--503.
\bibitem{Gromov}
	M. Gromov
	\emph{Hyperbolic groups},
	Essays in group theory, Math. Sci. Res. Inst. Publ., vol. 8, Springer, New York, 1987, 75--263.
\bibitem{HockingYoung}
	J.G. Hocking and G.S. Young,
	\emph{Topology},
	Addison-Wesley Series in Mathematics, Addison-Wesley Publishing Company, Reading, MA, 1961.
\bibitem{Kapovich}
	M. Kapovich,
	\emph{Hyperbolic manifolds and discrete groups},
	Progress in Mathematics, vol. 183 (2001), Birkhauser Boston Inc., Boston, MA.
\bibitem{Kovacevic}
	N. Kovacevic,
	\emph{Examples of M\"obius-like groups which are not M\"obius groups},
	Trans. Amer. Math. Soc. {\bf 351} (1999), no. 12.
\bibitem{GKuperberg}
	G. Kuperberg,
	\emph{A volume-preserving counterexample to the Seifert conjecture},
	Comm. Math. Helv. {\bf 71} (1996), no. 1, 70--79.
\bibitem{KKuperberg}
	K. Kuperberg,
	\emph{A smooth counterexample to the Seifert conjecture},
	Ann. of Math. (2) {\bf 140} (1994), no. 3, 723--732.
\bibitem{Kuratowski1}
	K. Kuratowski,
	\emph{Topology, Volume I},
	Academic Press, New York, NY, 1966.
\bibitem{Kuratowski2}
	K. Kuratowski,
	\emph{Topology, Volume II},
	Academic Press, New York, NY, 1968.
\bibitem{Mangum}
	B. S. Mangum,
	\emph{Incompressible surfaces and pseudo-Anosov flows},
	Topol. Appl. {\bf 87} (1998), 29--51.
\bibitem{MontgomeryZippin}
	D. Montgomery and L. Zippin,
	\emph{Translation groups of three-space},
	Amer. J. Math. {\bf 59} (1937), no. 1, 121--128.
\bibitem{Moore}
	R. L. Moore,
	\emph{Concerning upper semi-continuous collections of continua},
	Trans. Amer. Math. Soc. {\bf 27} (1925), no. 4, 416--428.
\bibitem{Rechtman}
	A. Rechtman,
	\emph{Existence of periodic orbits for geodesible vector fields on closed 3-manifolds},
	Ergod. Th. \& Dynam. Sys. {\bf 30} (2010), no. 6, 1817--1841.
\bibitem{Schweizer}
	P. A. Schweitzer,
	\emph{Counterexamples to the Seifert conjecture and opening closed leaves of foliations},
	Ann. of Math. {\bf 100} (1974), no.2, 386--400.
\bibitem{Seifert}
	H. Seifert,
	\emph{Closed integral curves in 3-space and isotopic two-dimensional deformations},
	Proc. Amer. Math. Soc. {\bf 1}, (1950), 287--302.
\bibitem{Taubes}
	C. H. Taubes,
	\emph{The Seiberg-Witten equations and the Weinstein conjecture},
	Geom. Top. {\bf 11} (2007), 2117--2202.
\bibitem{Thurston}
	W. P. Thurston,
	\emph{Geometry and topology of 3–manifolds},
	a.k.a. ``Thurston's notes,'' (1979).
\bibitem{Wilder}
	R.L. Wilder,
	\emph{Topology of manifolds},
	American Mathematical Society Colloquium Publications, vol. 32, American Mathematical Society, Providence, RI, 1979.
\bibitem{Zeghib}
	A. Zeghib,
	\emph{Sur les feuilletages g\'eod\'esiques continus des vari\'et\'es hyperboliques},
	Invent. Math. {\bf 114} (1993), no. 1, 193--206.
\end{thebibliography}
\end{document}